\documentclass[twoside,leqno,10pt, A4]{amsart}
\usepackage{amsfonts}
\usepackage{amsmath}
\usepackage{amscd}
\usepackage{amssymb}
\usepackage{amsthm}
\usepackage{amsrefs}
\usepackage{latexsym}
\usepackage{mathrsfs}
\usepackage{bbm}
\usepackage{enumerate}
\usepackage{graphicx}

\usepackage{amsfonts}
\usepackage{amsmath}
\usepackage{amscd}
\usepackage{amssymb}
\usepackage{amsthm}
\usepackage{amsrefs}
\usepackage{latexsym}
\usepackage{mathrsfs}
\usepackage{bbm}
\usepackage{amscd}
\usepackage{amssymb}
\usepackage{amsthm}
\usepackage{amsrefs}
\usepackage{latexsym}
\usepackage{mathrsfs}
\usepackage{bbm}
\usepackage{enumerate}
\usepackage{graphicx}
\usepackage{color}
\begin{document}

\newtheorem{theorem}[subsection]{Theorem}
\newtheorem{proposition}[subsection]{Proposition}
\newtheorem{lemma}[subsection]{Lemma}
\newtheorem{corollary}[subsection]{Corollary}
\newtheorem{conjecture}[subsection]{Conjecture}
\newtheorem{prop}[subsection]{Proposition}
\numberwithin{equation}{section}
\newcommand{\mr}{\ensuremath{\mathbb R}}
\newcommand{\mc}{\ensuremath{\mathbb C}}
\newcommand{\dif}{\mathrm{d}}
\newcommand{\intz}{\mathbb{Z}}
\newcommand{\ratq}{\mathbb{Q}}
\newcommand{\natn}{\mathbb{N}}
\newcommand{\comc}{\mathbb{C}}
\newcommand{\rear}{\mathbb{R}}
\newcommand{\prip}{\mathbb{P}}
\newcommand{\uph}{\mathbb{H}}
\newcommand{\fief}{\mathbb{F}}
\newcommand{\majorarc}{\mathfrak{M}}
\newcommand{\minorarc}{\mathfrak{m}}
\newcommand{\sings}{\mathfrak{S}}
\newcommand{\fA}{\ensuremath{\mathfrak A}}
\newcommand{\mn}{\ensuremath{\mathbb N}}
\newcommand{\mq}{\ensuremath{\mathbb Q}}
\newcommand{\half}{\tfrac{1}{2}}
\newcommand{\f}{f\times \chi}
\newcommand{\summ}{\mathop{{\sum}^{\star}}}
\newcommand{\chiq}{\chi \bmod q}
\newcommand{\chidb}{\chi \bmod db}
\newcommand{\chid}{\chi \bmod d}
\newcommand{\sym}{\text{sym}^2}
\newcommand{\hhalf}{\tfrac{1}{2}}
\newcommand{\sumstar}{\sideset{}{^*}\sum}
\newcommand{\sumprime}{\sideset{}{'}\sum}
\newcommand{\sumprimeprime}{\sideset{}{''}\sum}
\newcommand{\sumflat}{\sideset{}{^\flat}\sum}
\newcommand{\shortmod}{\ensuremath{\negthickspace \negthickspace \negthickspace \pmod}}
\newcommand{\V}{V\left(\frac{nm}{q^2}\right)}
\newcommand{\sumi}{\mathop{{\sum}^{\dagger}}}
\newcommand{\mz}{\ensuremath{\mathbb Z}}
\newcommand{\leg}[2]{\left(\frac{#1}{#2}\right)}
\newcommand{\muK}{\mu_{\omega}}
\newcommand{\thalf}{\tfrac12}
\newcommand{\lp}{\left(}
\newcommand{\rp}{\right)}
\newcommand{\Lam}{\Lambda_{[i]}}
\newcommand{\lam}{\lambda}
\def\L{\fracwithdelims}
\def\om{\omega}
\def\pbar{\overline{\psi}}
\def\phis{\phi^*}
\def\lam{\lambda}
\def\lbar{\overline{\lambda}}
\newcommand\Sum{\Cal S}
\def\Lam{\Lambda}
\newcommand{\sumtt}{\underset{(d,2)=1}{{\sum}^*}}
\newcommand{\sumt}{\underset{(d,2)=1}{\sum \nolimits^{*}} \widetilde w\left( \frac dX \right) }

\newcommand{\hf}{\tfrac{1}{2}}
\newcommand{\af}{\mathfrak{a}}
\newcommand{\Wf}{\mathcal{W}}

\newtheorem{mylemma}{Lemma}
\newcommand{\intR}{\int_{-\infty}^{\infty}}

\theoremstyle{plain}
\newtheorem{conj}{Conjecture}
\newtheorem{remark}[subsection]{Remark}

\makeatletter
\def\widebreve{\mathpalette\wide@breve}
\def\wide@breve#1#2{\sbox\z@{$#1#2$}%
     \mathop{\vbox{\m@th\ialign{##\crcr
\kern0.08em\brevefill#1{0.8\wd\z@}\crcr\noalign{\nointerlineskip}%
                    $\hss#1#2\hss$\crcr}}}\limits}
\def\brevefill#1#2{$\m@th\sbox\tw@{$#1($}%
  \hss\resizebox{#2}{\wd\tw@}{\rotatebox[origin=c]{90}{\upshape(}}\hss$}
\makeatletter

\title[Shifted moments of modular $L$-functions to a fixed level]{Shifted moments of modular $L$-functions to a fixed level}

\author[P. Gao]{Peng Gao}
\address{School of Mathematical Sciences, Beihang University, Beijing 100191, China}
\email{penggao@buaa.edu.cn}

\author[L. Zhao]{Liangyi Zhao}
\address{School of Mathematics and Statistics, University of New South Wales, Sydney NSW 2052, Australia}
\email{l.zhao@unsw.edu.au}

\begin{abstract}
 We establish upper bounds for shifted moments of modular $L$-functions to a fixed prime level under the generalized Riemann hypothesis.  
\end{abstract}

\maketitle

\noindent {\bf Mathematics Subject Classification (2010)}: 11M06  \newline

\noindent {\bf Keywords}: modular $L$-functions, shifted moments, upper bounds

\section{Introduction}
\label{sec 1}

 The density conjecture of N. Katz and P. Sarnak \cite{K&S} asserts that each reasonable family of $L$-functions is associated to a classical compact group (unitary, symplectic, and orthogonal).  This is regarded as the family's symmetry type can be ascertained by computing the corresponding $n$-level density of lower-lying zeros. In this matter, it was shown \cite{O&S} that the symmetric type of the family of quadratic Dirichlet $L$-functions is symplectic, while we know from \cite{HuRu} that of the family of Dirichlet $L$-functions to a fixed modulus is unitary. \newline
 
In addition to the above method of determining the symmetry type of a family of $L$-functions, it was observed in \cite{G&Zhao2022} that one may also obtain the information by evaluating (shifted) moments of the family.  For instance, B. Szab\'o \cite{Szab} proved under the generalized Riemann hypothesis (GRH) that for a large fixed modulus $q$, any positive integer $k$, real tuples ${\bf a} =  (a_1, \ldots, a_k), {\bf t} =  (t_1, \ldots, t_k)$  such that $a_j \geq 0$ and $|t_j| \leq q^A$ for a fixed positive real number $A$,
\begin{align}
\begin{split}
\label{eqn:shiftedMoments}
 \sum_{\chi\in X_q^*}\big| L\big( \tfrac12+it_1,\chi \big) \big|^{a_1} \cdots \big| L\big( \tfrac12+it_{k},\chi \big) \big|^{a_{k}} \ll_{\bf{t}, \bf{a}} &  \varphi(q)(\log q)^{(a_1^2+\cdots +a_{k}^2)/4} \prod_{1\leq j<l\leq k}  \big|\zeta(1+i(t_j-t_l)+\tfrac 1{\log q}) \big|^{a_ja_l/2}, 
\end{split}
\end{align}
   where $X_q^*$ denotes the set of primitive Dirichlet characters modulo $q$, $\varphi$ the Euler totient function and $\zeta(s)$ the Riemann zeta function. 
  Assuming $|t_j|\leq  X^A$ for a large real number $X$, the authors \cite{G&Zhao2024-3} proved, under GRH, that
\begin{align}
\label{eqn:shiftedMomentsquad}
\begin{split}
 \sumstar_{\substack{(d,2)=1 \\ d \leq X}} & \big| L\big( \tfrac12+it_1,\chi^{(8d)} \big) \big|^{a_1} \cdots \big| L\big(\tfrac12+it_{k},\chi^{(8d)}  \big) \big|^{a_{k}} \\
& \ll X(\log X)^{(a_1^2+\cdots +a_{k}^2)/4} \prod_{1\leq j<l \leq k} \Big|\zeta \Big( 1+i(t_j-t_l)+\tfrac 1{\log X} \Big) \Big|^{a_ia_j/2} \Big|\zeta \Big(1+i(t_j+t_l)+\tfrac 1{\log X} \Big) \Big|^{a_ia_j/2} \\
& \hspace*{2cm} \times \prod_{1\leq j\leq k} \Big|\zeta \Big(1+2it_j+\tfrac 1{\log X} \Big) \Big|^{a^2_i/4+a_i/2},
\end{split}
\end{align}
 where $\sum^*$ denotes a sum over positive and square-free integers, $\chi^{(8d)}=\leg {8d}{\cdot}$ is the Kronecker symbol. \newline
 
The bounds in both \eqref{eqn:shiftedMoments} and \eqref{eqn:shiftedMomentsquad} are believed to be sharp. In fact, the authors \cite{G&Zhao24-11} proved that this is the case for $k=2$ and primes $q$ under GRH. Here one sees (upon setting $q=X$ in \eqref{eqn:shiftedMoments}) that these (sharp) bounds are indeed distinct as the symmetry types of the corresponding family of $L$-functions differ. \newline
 
 As \eqref{eqn:shiftedMoments} and \eqref{eqn:shiftedMomentsquad} exhibit bounds for unitary and symplectic families, curiosity naturally arises about the remaining case of orthogonal type.  For this, we turn our attention to a typical orthogonal family, studied already by H. Iwaniec, W. Luo and P. Sarnak in their seminal work \cite{ILS}, by computing its one-level density there. To describe the family, let $H^*_\kappa(N)$ denote the set of all holomorphic cusp forms of even weight $\kappa$ that are newforms of level $N$.  For each $f \in H^*_\kappa(N)$, its Fourier expansion at infinity is written as
\[
f(z) = \sum_{n=1}^{\infty} \lambda_f (n) n^{(\kappa -1)/2} e(nz), \quad \mbox{where} \quad e(z) = \exp (2 \pi i z).
\]
  The modular $L$-function $L(s, f)$ associated to $f$ is defined for $\Re(s) > 1$ to be
\begin{align*}
L(s, f ) &= \sum_{n=1}^{\infty} \frac{\lambda_f(n)}{n^s}. 
\end{align*}
  
  Recall (see \cite[Section 2]{ILS}) that the function $L(s, f )$ is entire and satisfies the functional equation given by
\begin{align}
\label{equ:FElevelN}
\Lambda (s, f) = \left(\frac{\sqrt{N}}{2\pi} \right)^s \Gamma (s + \tfrac{\kappa -1}{2}) L(s, f)
= \epsilon_f \Lambda (1- s, f),
\end{align}
 where $\epsilon_f=i^\kappa\mu(N)\lambda_f(N)N^{1/2} \in \{\pm 1\}$. Here $\mu$ is the M\"obius function. \newline
  
 It follows that $H^*_\kappa(N)$ can be splited into two disjoint subsets, $H^{\pm}_\kappa(N) :=\{f \in H^*_\kappa(N) :\epsilon_f =\pm 1\}$.  From \cite{ILS}, we learn that the symmetry type of the families of $L$-functions $L(s,f)$ for $f \in H^*_\kappa(N)$ is orthogonal, $SO(\text{even})$ for $f \in H^{+}_\kappa(N)$ and $SO(\text{odd})$ for $f \in H^{-}_\kappa(N)$. \newline
 
Out aim of this paper is to further demonstrate the symmetry type of the family of $L$-functions $L(s,f)$ for $f \in H^*_\kappa(N)$ by estimating from above the shifted moments of this family. Our result is as follows.
\begin{theorem}
\label{t1}
 With the notation as above and the truth of GRH, let $k\geq 1$ be a fixed integer and $a_1,\ldots, a_{k}$, $A$ fixed positive real numbers. Suppose that $N$ is a large prime number and $t=(t_1,\ldots ,t_{k})$ a real $k$-tuple with $|t_j|\leq  N^A$. Then
\begin{align}
\label{cqLbounds}
\begin{split}
  \sum_{\substack{f \in H^*_k(N)}} \prod^k_{j=1} \big| L\big( \tfrac12 +it_j,f \big) \big|^{a_j}  \ll &  N. 
\end{split}
\end{align}
  The implied constant depends on $k$, $A$ and the $a_j$'s, but not on $N$ or the $t_j$'s.
\end{theorem}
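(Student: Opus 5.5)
The plan follows the Soundararajan--Harper method as adapted by Szab\'o in \cite{Szab} and by the authors for \eqref{eqn:shiftedMomentsquad}, with the Petersson trace formula for $H^*_\kappa(N)$, $N$ prime, playing the role of orthogonality of characters. Since every term in \eqref{cqLbounds} is nonnegative, we may insert the harmonic weights $\omega_f=\Gamma(\kappa-1)/((4\pi)^{\kappa-1}\langle f,f\rangle)$. By Hoffstein--Lockhart and Goldfeld--Hoffstein--Lieman, $\omega_f^{-1}\ll N^{1+\varepsilon}$; under GRH for $\operatorname{sym}^2 f$ this improves to $\omega_f^{-1}\ll N\log\log N$. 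It therefore suffices to control the weighted sum $\sum_f\omega_f\prod_j|L(\tfrac12+it_j,f)|^{a_j}$.

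\textbf{Step 1 (pointwise bound).} Under GRH for $L(s,f)$, the Chandee--Soundararajan argument gives, for $2\le x\le N^{2}$,
\[
\log|L(\tfrac12+it,f)|\le \Re\sum_{p\le x}\frac{\lambda_f(p)}{p^{1/2+1/\log x+it}}\frac{\log(x/p)}{\log x}+\Re\sum_{p\le x^{1/2}}\frac{\lambda_f(p^2)-1}{p^{1+2it}}+O\Big(\frac{\log N}{\log x}+1\Big).
\]
Here we use the Hecke relation $\lambda_f(p)^2=\lambda_f(p^2)+1$. The term $-\sum_p p^{-1-2it}$ is the orthogonal feature. Under GRH for $\operatorname{sym}^2 f$, the $\lambda_f(p^2)$-part is $O(1)$ for $f$ outside a negligible exceptional set, which is handled by a crude bound.

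\textbf{Step 2 (Harper's decomposition).} Partition the primes up to $x$ into the ranges $(x_{i-1},x_i]$ with $x_i=N^{\beta_i}$ and $\beta_i$ growing geometrically. For each $f$, let $i_0(f)$ be the first index at which some partial sum $\sum_{x_{i-1}<p\le x_i}$ becomes large. On the good set, exponentiate each partial sum using a truncated Taylor polynomial of length about $\beta_i^{-3/4}$. The resulting Dirichlet polynomial has length at most $N^{1-\delta}$, so the Petersson formula applies: the diagonal contributes through the multiplicativity of $\lambda_f$, and the Kloosterman terms are $O(N^{-\delta})$ by Weil's bound. On the bad sets, use Markov's inequality with high moments of the offending piece.

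\textbf{Step 3 (main obstacle: the Euler product).} The diagonal evaluation produces the product
\[
\prod_{p\le x}\Big(1+\frac{|\sum_j a_jp^{-it_j}|^2}{4p}-\frac{\Re\sum_j a_jp^{-2it_j}}{2p}+\cdots\Big).
\]
Step 3 is to bound this product by $O(1)$ uniformly in $|t_j|\le N^A$. This is precisely where the claimed bound $\ll N$ stands or falls, and I expect it to fail as worded. When the $t_j$ are close to each other and to $0$, the product is of size $(\log N)^{(\sum a_j)^2/4-\sum a_j/2}$. For instance, if $k=1$, $t_1=0$ and $a_1=4$, this is $(\log N)^{2}$, which matches the known conjectural and proven order $N(\log N)^{a(a-1)/2}$ of the orthogonal moments. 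So Step 3 cannot be carried out in this generality.

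What the method does deliver is
\[
\ll N\prod_{j<l}\big|\zeta(1+i(t_j-t_l)+\tfrac1{\log N})\zeta(1+i(t_j+t_l)+\tfrac1{\log N})\big|^{a_ja_l/2}\prod_j\big|\zeta(1+2it_j+\tfrac1{\log N})\big|^{a_j^2/4-a_j/2},
\]
together with a factor $(\log N)^{\sum_j a_j^2/4}$. The bound $\ll N$ follows from this only in restricted ranges: when the combined exponent of $\log N$ is nonpositive (for example $k=1$, $a_1\le 1$), or when the $t_j\pm t_l$ and $2t_j$ are bounded away from $0$ so that every $\zeta$-factor is $O(1)$ and the $(\log N)$-powers are absorbed. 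I would therefore carry out Steps 1--2 in full. Then I would read Theorem \ref{t1} as the above bound, checking at Step 3 whether the authors' intended exponents make it $O(N)$ in the stated range.
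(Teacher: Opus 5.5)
Your diagnosis is correct. Theorem \ref{t1} is false as stated, so no argument can complete your Step 3. Your Steps 1--2 are exactly the paper's route: the pointwise bound \eqref{logLupperboundnonquad}, Harper's decomposition with $\alpha_i=20^{i-1}/(\log\log N)^2$, the exceptional sets $\mathcal S(j)$ and $\mathcal P(m)$ removed by high moments and Cauchy--Schwarz, truncated exponentials $E_\ell$, and orthogonality from Lemmas \ref{lemma:delta} and \ref{lemma:deltasimplified}.

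The paper gets past your Step 3 only by asserting, in the last chain of estimates of its proof, that the diagonal contribution is $\prod_{p\le N^{\alpha_j}}(1+O(p^{-2}))$. This is where it breaks. By Lemma \ref{lemma:cubicclass} and Lemma \ref{lemma:deltasimplified} with $n=p^2$, one has $|H^*_\kappa(N)|^{-1}\sum_f\lambda_f(p)^2=1+O(1/p)$. So the quadratic term of $\prod_i|E(\tfrac12\mathcal M_{i,j}(f))|^2$ contributes about $2(\Re h(p))^2/p$ at each prime, not $O(p^{-2})$. The product is therefore roughly $\exp(\sum_{p\le N^{\alpha_j}}2(\Re h(p))^2/p)$, which is $(\log N)^{a^2/2}(\log\log N)^{-O(1)}$ when all $t_j=0$. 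For $j=\mathcal J$, where $\alpha_{\mathcal J}\asymp 1$, nothing compensates this growth.

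A clean counterexample under the theorem's own hypotheses is $k=1$, $t_1=0$, $a_1=2$. The harmonic weights satisfy $\omega_f^{-1}\asymp NL(1,\mathrm{sym}^2f)\gg N(\log\log N)^{-O(1)}$ under GRH. Meanwhile $\sum_f\omega_fL(\tfrac12,f)^2\asymp\log N$: the squares $n=r^2$ in the approximate functional equation contribute $\asymp\sum_{r\le\sqrt N}1/r$ (cf. \cite{IwSa00}). Hence $\sum_fL(\tfrac12,f)^2\gg N\log N(\log\log N)^{-O(1)}$. In particular, the logarithms in \cite{IwSa00} are not an artefact of the weights, contrary to the remark closing the introduction.

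Your own write-up has slips you should fix.

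\emph{Step 1.} Your sign on the $p^2$-term is right; the paper's \eqref{logLupperboundnonquad} has the wrong sign, since $\alpha_p^2+\beta_p^2=\lambda_f(p^2)-1$, and this term is the source of the orthogonal exponent $-a_j/2$. However, your display lacks the factor $\tfrac12$ on the $p^2$-sum, which your Step 3 does use.

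\emph{Step 3 Euler factor.} The displayed factor omits $\Re((\sum_ja_jp^{-it_j})^2)/(4p)$. The variance term is $(\sum_ja_j\cos(t_j\log p))^2/(2p)=2(\Re h(p))^2/p$. So for coincident shifts near $0$ the exponent of $\log N$ is $(\sum_ja_j)^2/2-\sum_ja_j/2$, not $(\sum_ja_j)^2/4-\sum_ja_j/2$. Your example $a_1=4$ gives $(\log N)^6=(\log N)^{a(a-1)/2}$, not $(\log N)^2$. Your final display, which contains the $\zeta(1+i(t_j+t_l))$ and $|\zeta(1+2it_j)|^{a_j^2/4}$ factors, is the correct one.

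\emph{Favourable ranges.} These are narrower than you claim. When the shifts are separated, the factor $(\log N)^{\sum_ja_j^2/4}$ is not absorbed by anything. Also, for $1\le|u|\le 2N^A$, RH only gives $(\log\log N)^{-O(1)}\ll|\zeta(1+iu)|\ll(\log\log N)^{O(1)}$, not $O(1)$. The bound is $\ll N$ only when the total exponent of $\log N$ is $\le 0$. This requires some $|t_j|\le(\log N)^{-c}$ with $a_j<2$, for example $k=1$, $a_1\le1$, $|t_1|\ll1/\log N$.

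\emph{Harmonic weights.} Passing to the harmonic sum via $\max_f\omega_f^{-1}$ costs $(\log\log N)^{O(1)}$, so even in those ranges you would not reach $\ll N$. The paper avoids this loss by applying the unweighted formula of Lemmas \ref{lemma:delta} and \ref{lemma:deltasimplified} directly.
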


   We note that the bounds given in \eqref{cqLbounds} differ from those given in \eqref{eqn:shiftedMoments} and \eqref{eqn:shiftedMomentsquad}, which reflects the orthogonal symmetry nature of the family of the underlying $L$-functions. We also note that by a straightforward modification of the arguments used in the proof of Theorem \ref{t1} and by making use of \cite[(2.17)]{HM07}, we may establish upper bounds for the shifted moments of the families of $L$-functions $L(s,f)$ for $f \in H^{\pm}_\kappa(N)$ as well. \newline

Finally, we end this section by contrasting our result with those in \cite{IwSa00}.  Theorem~\ref{t1} estimates from above unweighted moments in \eqref{cqLbounds} while those in \cite{IwSa00} use harmonic weights.  These weights have non-trivial size (see \cite[(2.9)]{IwSa00}) and lead to certain logarithmic factors, e.g. \cite[(2.21)]{IwSa00}.  As we obtain an unweighted bound, these logarithmic factors are expectedly absent here.

\section{Preliminaries}
\label{sec 2}

   In this section, we gather several necessary auxiliary results.
\subsection{Cusp form $L$-functions}
\label{sec:cusp form}

    We reserve the letter $p$ for a prime number throughout in this paper.  For any $f \in H^*_{\kappa} (N)$, the associated modular $L$-function $L(s, f)$ for $\Re(s) > 1$ has an Euler product (see \cite[(3.2)]{ILS})
\begin{align*}
L(s, f ) &
 = \prod_{p } \left(1 - \frac{\lambda_f (p) }{p^s}  + \frac{\chi_0(p)}{p^{2s}}\right)^{-1}=\prod_{p} \left(1 - \frac{\alpha_p }{p^s} \right)^{-1}\left(1 - \frac{\beta_p }{p^s} \right)^{-1},
\end{align*}
  where $\chi_0$ denotes the principal character modulo $N$. 
 By Deligne's proof \cite{D} of the Weil conjecture, we know that for $p \nmid N$, 
\begin{align*}
|\alpha_{p}|=|\beta_{p}|=1, \quad \alpha_{p}\beta_{p}=1.
\end{align*}
  Moreover, we infer from \eqref{equ:FElevelN} that $|\lambda_f(N)|=1/N$ and that $\lambda_f(p) \in \mr$ for $p \nmid N$. It follows that  we have $\lambda_f (1) =1$ and when $N$ is a prime, we have $|\lambda_f(n)| \leq d(n)$, for $n \geq 1$, where $d(n)$ is the number of positive divisors $n$. \newline

   From \cite[(2.73)]{ILS}, we see that for $N>1$,
\begin{align}
\label{Hpmsize}
 |H^{\pm}_{\kappa} (N)|= \frac {k-1}{24} N+O((\kappa N)^{5/6}).
\end{align}
  Here and after, we write $|S|$ for the cardinality of a set $S$. \newline

  We quote the following result from \cite[Lemma 2.8]{HM07} on the multiplicative properties of $\lambda_f$.
\begin{lemma}
\label{lemma:cubicclass}
 If $f \in H^*_{\kappa} (N)$, then 
\begin{align*}
 \lambda_f(m)\lambda_f(n)=\sum_{\substack{d \mid (m,n) \\ (d,N)=1}}\lambda_f \Big(\frac {mn}{d^2}\Big).
\end{align*} 
   In particular, if $(m,n) = 1$, then
\begin{align*}
 \lambda_f(m)\lambda_f(n)=\lambda_f(mn).
\end{align*} 
Moreover, for a prime $p \nmid N$,  
\begin{align*}
\begin{split}
 \lambda_f(p)^{2m}=& \sum^{m}_{r=0}\left (\binom {2m}{m-r}-\binom {2m}{m-r-1}  \right ) \lambda_f(p^{2r}), \quad \mbox{and} \\
  \lambda_f(p)^{2m+1}=& \sum^{m}_{r=0}\left (\binom {2m+1}{m-r}-\binom {2m+1}{m-r-1}  \right ) \lambda_f(p^{2r+1}).
\end{split}
\end{align*} 
\end{lemma}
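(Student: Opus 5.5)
The proof runs through the Euler product recalled at the start of Section~\ref{sec:cusp form}. Multiplicativity in coprime arguments and the prime-power identities at $p\nmid N$ come from comparing local factors. The level primes $p\mid N$ need their own short check, since there the sum over $d$ is cut off by $(d,N)=1$. The binomial formulas then follow by induction from the case $m=1$ of the Hecke relation.

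\emph{Step 1 (multiplicativity).} Since $L(s,f)=\sum_n \lambda_f(n)n^{-s}$ equals a product over $p$ of local factors, each a power series in $p^{-s}$, the coefficients satisfy $\lambda_f(n)=\prod_{p^j\| n}\lambda_f(p^j)$. Hence $\lambda_f(m)\lambda_f(n)=\lambda_f(mn)$ whenever $(m,n)=1$. This multiplicativity also reduces the general identity to $m=p^a$, $n=p^b$. Indeed, both sides are multiplicative in the pair $(m,n)$ prime by prime: the divisors $d\mid (m,n)$ factor into prime-power divisors, and the condition $(d,N)=1$ is local at each prime.

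\emph{Step 2 (prime powers).} For $p\nmid N$, expanding $(1-\alpha_p p^{-s})^{-1}(1-\beta_p p^{-s})^{-1}$ with $\alpha_p\beta_p=1$ gives
\[
\lambda_f(p^j)=\sum_{i=0}^j\alpha_p^i\beta_p^{j-i}=\frac{\alpha_p^{j+1}-\beta_p^{j+1}}{\alpha_p-\beta_p}.
\]
This is the character of $\mathrm{Sym}^j$ of $SU(2)$, i.e.\ a Chebyshev polynomial $U_j(\lambda_f(p)/2)$. A direct telescoping computation, or the Clebsch--Gordan rule, gives
\[
\lambda_f(p^a)\lambda_f(p^b)=\sum_{i=0}^{\min(a,b)}\lambda_f(p^{a+b-2i}).
\]
This is exactly $\sum_{d\mid (p^a,p^b)}\lambda_f(p^{a+b}/d^2)$ with $d=p^i$.

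For $p\mid N$, the local factor is $(1-\lambda_f(p)p^{-s})^{-1}$ because $\chi_0(p)=0$. So $\lambda_f(p^j)=\lambda_f(p)^j$ and $\lambda_f(p^a)\lambda_f(p^b)=\lambda_f(p^{a+b})$. This matches the formula, because the only admissible divisor is $d=1$.

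\emph{Step 3 (powers of $\lambda_f(p)$).} Fix $p\nmid N$ and write $\lambda_j=\lambda_f(p^j)$. Step 2 with $a=1$ gives the recursions
\[
\lambda_1\lambda_j=\lambda_{j+1}+\lambda_{j-1}\quad (j\ge1), \qquad \lambda_1\lambda_0=\lambda_1 .
\]
Set $c_{n,j}=\binom{n}{(n-j)/2}-\binom{n}{(n-j)/2-1}$ for $j\equiv n \pmod 2$, $0\le j\le n$, with the convention that binomials with negative lower index vanish. I would prove $\lambda_1^n=\sum_j c_{n,j}\lambda_j$ by induction on $n$; the case $n=0$ is trivial.

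Multiplying the identity for $n$ by $\lambda_1$ and applying the recursions shows that the new coefficients are
\[
c_{n+1,j}=c_{n,j-1}+c_{n,j+1}\ (j\ge1),
\]
where for $n+1$ even the $j=0$ coefficient receives only $c_{n,1}$. Pascal's rule then does the work. For example, with $n=2m$ and $j=2r+1$,
\[
c_{2m,2r}+c_{2m,2r+2}=\Big[\tbinom{2m}{m-r}+\tbinom{2m}{m-r-1}\Big]-\Big[\tbinom{2m}{m-r-1}+\tbinom{2m}{m-r-2}\Big]=\tbinom{2m+1}{m-r}-\tbinom{2m+1}{m-r-1},
\]
as required. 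The odd-to-even step is the same computation.

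The only delicate point, and the main obstacle, is the boundary bookkeeping. First, the $j=0$ term contributes $\lambda_1$ rather than $\lambda_1+\lambda_{-1}$; consistently, $c_{n,-1}=\binom{n}{(n+1)/2}-\binom{n}{(n-1)/2}=0$ by symmetry of binomial coefficients. Second, the top index $j=n+1$ needs the vanishing-binomial convention. With these checked, the induction closes and yields both displayed formulas, indexed by $j=2r$ and $j=2r+1$ respectively.
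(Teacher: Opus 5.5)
Your proof is correct. The paper gives no argument of its own here: it quotes the lemma from \cite[Lemma 2.8]{HM07}. So your write-up is a self-contained substitute rather than a reconstruction of anything in the text.

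The main difference from the usual treatment is the direction of the logic. In Hecke theory, the relation $\lambda_f(m)\lambda_f(n)=\sum_{d\mid(m,n),\,(d,N)=1}\lambda_f(mn/d^2)$ is the primary input, and the Euler product \cite[(3.2)]{ILS} is deduced from it. You instead start from the Euler product, with $\chi_0(p)$ in the quadratic term, and recover the relation by comparing local factors. The two statements are equivalent and the paper takes the Euler product as known, so this is legitimate. It also lets you treat $p\mid N$ and $p\nmid N$ in one framework.

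Your induction for the binomial formulas is sound, including the two boundary checks you single out. If you want to bypass that bookkeeping, you can read the coefficients off directly. Write $\lambda_f(p)=\alpha_p+\alpha_p^{-1}$ and expand $\lambda_f(p)^n=\sum_i\binom{n}{i}\alpha_p^{n-2i}$. The terms $\alpha_p^{\pm j}$ share the coefficient $b_j=\binom{n}{(n-j)/2}$. Moreover $\alpha_p^{j}+\alpha_p^{-j}=\lambda_f(p^j)-\lambda_f(p^{j-2})$ for $j\geq 1$, with $\lambda_f(p^{-1})=0$, and the $j=0$ term is just $b_0\lambda_f(1)$. Summation by parts then gives $\lambda_f(p)^n=\sum_j (b_j-b_{j+2})\lambda_f(p^j)$. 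This is exactly the stated coefficient $\binom{n}{(n-j)/2}-\binom{n}{(n-j)/2-1}$, with no induction needed.
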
 
 
   We note from the above lemma $\lambda_f(p)^{2m}$ (resp. $\lambda_f(p)^{2m+1}$) can be expresses as a sum over only even (resp. odd) powers $\lambda_f(p)^{2r}$ (resp. $\lambda_f(p)^{2r+1}$). \newline
  
  For integers $m,n$, we define the Kloosterman sum $S(m,n, q)$ by 
\begin{align*}
  S(m,n, q)=\sumstar_{\substack{ u \shortmod q }} e \Big(\frac {mu+n\overline u}{q}\Big),
\end{align*}
   where $u\overline{u} \equiv 1 \pmod q$.  Moreover, we define
\begin{align*}
 \Delta^*_{\kappa, N}(n)=\sum_{f \in H^*_{\kappa} (N)}\lambda_f(n).
\end{align*}   
  
   We quote from \cite[Lemma 2.9]{HM07} on $\Delta^*_{\kappa, N}(n)$, which follows from Propositions 2.1, 2.11, 2.15 and Lemma 2.12 of \cite{ILS}.
\begin{lemma}
\label{lemma:delta}
  Let $X,Y$ be two real numbers with $X<N$. For $N$ prime and $(n,N^2)\mid N$, we write
\begin{align*}
 \Delta^*_{\kappa, N}(n)=\Delta^{'}_{\kappa, N}(n)+\Delta^{\infty}_{\kappa, N}(n), 
\end{align*} 
   where
\begin{align*}
 \Delta^{'}_{\kappa, N}(n)=\frac {(\kappa-1)N}{12\sqrt{n}}\delta_{n, \square_Y}+\frac {(\kappa-1)N}{12}\sum_{\substack{(m,N)=1 \\ m \leq Y}}\frac {2\pi i^{\kappa}}{m}
 \sum_{\substack{c \equiv 0 \shortmod N \\ c \geq N}}\frac {S(m^2,n;c)}{c}J_{\kappa-1} \Big(4\pi \frac {\sqrt{m^2n}}{c}\Big),
\end{align*} 
$J_{\kappa-1}$ is the $J$-Bessel function, $\delta_{n, \square_Y}=1$ if $n=m^2$ with $m \leq Y$ and zero otherwise.  Let $(a_m)$ be a sequence of complex numbers satisfying 
\begin{align*}
 \sum_{\substack{(m,nN)=1 \\ m < M}}\lambda_f(m)a_m \ll (n\kappa N)^{\varepsilon}, \quad \log M  \ll \log (\kappa N)
\end{align*} 
   for every $f \in H^*_{\kappa}(1) \cup H^*_{\kappa}(N)$ with the implied constant depending on $\varepsilon$ only. Then if $(n,N^2)|N$, we have
\begin{align*}
 \sum_{\substack{(m,nN)=1 \\ m < M}}\Delta^{\infty}_{\kappa, N}(nm)a_m \ll \frac {\kappa N}{\sqrt{(n, N)}}\Big (\frac 1X+\frac 1{\sqrt{Y}}\Big ) (n\kappa NXY)^{\varepsilon}.
\end{align*}    
\end{lemma}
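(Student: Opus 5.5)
The plan is to reconstruct the argument of \cite{ILS} (Propositions 2.1, 2.11, 2.15 and Lemma 2.12), as assembled in \cite[Lemma 2.9]{HM07}. The point is to convert the unweighted sum $\Delta^*_{\kappa,N}(n)$ into a harmonically weighted one, to which the Petersson formula applies.

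\textbf{Step 1 (removing the harmonic weight).} For $N$ prime and $f\in H^*_\kappa(N)$, the harmonic weight satisfies
\[
\omega_f^{-1}=\frac{(\kappa-1)N}{12}\,Z_N(1,f)\,(1+\text{tiny}), \qquad Z_N(s,f)=\sum_{(m,N)=1}\frac{\lambda_f(m^2)}{m^{s}} .
\]
This gives $Z_N(1,f)=\zeta^{(N)}(2)^{-1}L^{(N)}(1,\mathrm{sym}^2 f)$ up to the local factor at $N$. Hence
\[
\Delta^*_{\kappa,N}(n)=\frac{(\kappa-1)N}{12}\sum_f\omega_f\,Z_N(1,f)\lambda_f(n).
\]
I would truncate $Z_N(1,f)$ to $m\le Y$ using a smooth weight and a contour shift. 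The tail $m>Y$ is bounded by a Mellin integral of $L(1+w,\mathrm{sym}^2 f)$, shifted to $\Re w=-\tfrac12+\varepsilon$. This tail is the source of the $Y^{-1/2}(\kappa N)^{\varepsilon}$ part of the error; it is placed into $\Delta^\infty$.

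\textbf{Step 2 (Hecke relations and Petersson).} For $(m,nN)=1$, Lemma \ref{lemma:cubicclass} gives $\lambda_f(m^2)\lambda_f(n)=\lambda_f(m^2n)$. I would then apply the Petersson formula to $\sum_f\omega_f\lambda_f(m^2n)$, taken over an orthogonal basis of the \emph{full} space $S_\kappa(N)$. This gives the diagonal term $\delta_{m^2n=1}$, which after re-summation becomes $\delta_{n,\square_Y}/\sqrt n$. It also gives the Kloosterman series $2\pi i^\kappa\sum_{c\equiv0 (N)}c^{-1}S(m^2,n;c)J_{\kappa-1}(4\pi\sqrt{m^2n}/c)$. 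Together these constitute $\Delta'_{\kappa,N}(n)$.

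\textbf{Step 3 (oldform subtraction).} At prime level the oldforms come from $H^*_\kappa(1)$. Following \cite[Prop.~2.15]{ILS}, their weighted contribution is expressed through $\lambda_g(\cdot)$ for $g\in H^*_\kappa(1)$, multiplied by factors of size $N^{-1}\sqrt{(n,N)}^{-1}$ and a secondary sum of length $\le X$. After twisting by $a_m$ and summing over $m<M$, each such piece is $\sum_{(m,nN)=1}\lambda_g(m)a_m$. The hypothesis bounds this by $(n\kappa N)^{\varepsilon}$. For the same reason the hypothesis is assumed for $f\in H^*_\kappa(N)$ as well: it handles the tail from Step 1 once it is twisted by $a_m$.

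\textbf{Main obstacle.} The hard part is organising all non-main pieces into $\Delta^\infty$ so that the bound survives the $m$-sum \emph{without} losing a factor of $M$. The device is to never bound $\Delta^\infty(nm)$ termwise. Instead one interchanges the order of summation so that each piece carries $\sum_m\lambda_f(m)a_m$ inside, and then uses the hypothesis together with $\log M\ll\log(\kappa N)$.

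The parameter $X<N$ enters through the truncation in the level-lowering identity of \cite[Lemma 2.12]{ILS}, where divisors up to $X$ are kept. This truncation yields the $1/X$ term. Collecting the contributions then gives
\[
\frac{\kappa N}{\sqrt{(n,N)}}\Big(\frac1X+\frac1{\sqrt Y}\Big)(n\kappa NXY)^{\varepsilon}.
\]
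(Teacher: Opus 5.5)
\textbf{Main issue: Step 2 is wrong as written.} The paper does not prove this lemma; it quotes \cite[Lemma 2.9]{HM07}, which rests on \cite{ILS}. Your overall strategy is therefore the right one: express $\omega_f^{-1}$ through $Z_N(1,f)$, truncate at $Y$, apply Petersson, subtract the level-one oldforms, and estimate $\Delta^\infty$ only after pulling $\sum_m\lambda_f(m)a_m$ inside.

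The problem is in Step 2. Suppose you use $\lambda_f(m^2)\lambda_f(n)=\lambda_f(m^2n)$ and apply Petersson to the pair $(m^2n,1)$. Then the diagonal term is $\delta_{m^2n,1}$, and summing over $m$ gives only $\delta_{n,1}$, not $\delta_{n,\square_Y}/\sqrt n$. The off-diagonal terms involve $S(m^2n,1;c)$, not $S(m^2,n;c)$.

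Worse, restricting the truncated $Z_N(1,f)$ to $(m,n)=1$ discards exactly the term $m=\sqrt n$ that produces the main term, because on the diagonal $n=m^2$ one has $(m,n)=m$.

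The correct step uses no Hecke relation. For every $m\le Y$ with $(m,N)=1$, apply Petersson directly to the pair $(m^2,n)$. This gives $\delta_{m^2,n}$ and the series with $S(m^2,n;c)J_{\kappa-1}(4\pi\sqrt{m^2n}/c)$, exactly as in $\Delta'_{\kappa,N}(n)$. The condition $(m,nN)=1$ and the multiplicativity $\lambda_f(nm)=\lambda_f(n)\lambda_f(m)$ belong to the other variable, the twisting index in $\sum_m a_m\Delta^\infty_{\kappa,N}(nm)$. You have conflated the two $m$'s.

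\textbf{Two further points to make explicit.} First, in Step 1 the shift of the Mellin integral to $\Re w=-\frac12+\varepsilon$ gives $Y^{-1/2}(\kappa N)^{\varepsilon}$ only if you have Lindel\"of-type bounds for $L(s,\mathrm{sym}^2 f)$ on $\Re s=\frac12+\varepsilon$. That amounts to GRH for the symmetric square; convexity alone would cost a positive power of $\kappa N$. This hypothesis is not in the lemma as quoted, so you must state that you assume it (it is covered by the paper's standing GRH).

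Second, for prime $N$ there is no ``secondary sum of length $\le X$''. Since $X<N$, the entire level-one (oldform) contribution goes into $\Delta^\infty_{\kappa,N}$. A Gram-matrix computation on the span of $g(z)$ and $g(Nz)$, together with the hypothesis for $g\in H^*_\kappa(1)$, bounds this contribution by $\kappa(n\kappa NY)^{\varepsilon}$ when $(n,N)=1$ and by $\kappa N^{-1}(n\kappa NY)^{\varepsilon}$ when $N\mid n$. Both bounds are at most $\kappa N X^{-1}(n,N)^{-1/2}(n\kappa NY)^{\varepsilon}$, precisely because $X<N$. The factor $(n,N)^{-1/2}$ in the tail term comes from $\lambda_f(N)^2=N^{-1}$.
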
    
   
   We next quote \cite[Lemma 2.10]{HM07} for a simplification of the above lemma. 
\begin{lemma}
\label{lemma:deltasimplified}
   If $N$ is prime and $(n,N)=1$, then for any $\varepsilon>0$, 
\begin{align*}
\begin{split}
 \frac 1{|H^*_{\kappa}(N)|}\Delta^{'}_{\kappa, N}(n)=& \frac {1}{\sqrt{n}}\delta_{n, \square_Y}+O(n^{(\kappa-1)/2}N^{-\kappa+1/2+\varepsilon}), \quad \mbox{and} \\
  \frac 1{|H^*_{\kappa}(N)|}\Delta^{'}_{\kappa, N}(Nn) \ll &  n^{1/2}N^{-3/2+\varepsilon}.
\end{split}
\end{align*}
\end{lemma}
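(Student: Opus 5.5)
The plan is to treat the two parts of $\Delta'_{\kappa,N}$ separately: the diagonal term $\frac{(\kappa-1)N}{12\sqrt n}\delta_{n,\square_Y}$ and the Kloosterman--Bessel term. We divide each by $|H^*_\kappa(N)|$, using that $|H^*_\kappa(N)|=|H^+_\kappa(N)|+|H^-_\kappa(N)|$ is $\frac{(\kappa-1)N}{12}$ up to the lower-order error in \eqref{Hpmsize}. Throughout, $\kappa\ge 2$ is even and fixed, and $N$ is a large prime.

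\emph{Diagonal term.} For $(n,N)=1$ the quotient of the diagonal term by $|H^*_\kappa(N)|$ is $\delta_{n,\square_Y}/\sqrt n$ times $\frac{(\kappa-1)N/12}{|H^*_\kappa(N)|}$. I would read the normalising constant in Lemma \ref{lemma:delta} as matched to $|H^*_\kappa(N)|$, as in \cite{ILS}, so that this quotient is exactly $\delta_{n,\square_Y}/\sqrt n$. If one insists on \eqref{Hpmsize}, an extra relative error $O(N^{-1/6})$ appears, which has to be absorbed. For the second assertion, the diagonal term for $Nn$ is harmless. Lemma \ref{lemma:delta} requires $(Nn,N^2)\mid N$, so $N\nmid n$, hence $Nn$ is never a square and $\delta_{Nn,\square_Y}=0$.

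\emph{Off-diagonal term for $(n,N)=1$.} After dividing by $|H^*_\kappa(N)|$, what remains is $\ll\sum_{m\le Y}m^{-1}\sum_{N\mid c}c^{-1}|S(m^2,n;c)|\,|J_{\kappa-1}(4\pi m\sqrt n/c)|$. I would use the Weil bound $|S(a,b;c)|\le d(c)(a,b,c)^{1/2}c^{1/2}$ together with $J_{\kappa-1}(x)\ll x^{\kappa-1}$. Writing $c=Nc'$, the $c'$-sum is $\sum_{c'}(Nc')^{-\kappa+1/2+\varepsilon}(m^2,n,c)^{1/2}$. It converges because $\kappa-\tfrac12>1$, and it gives $N^{-\kappa+1/2+\varepsilon}(mn)^{\varepsilon}$. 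Multiplying by $(m\sqrt n)^{\kappa-1}/m$ and summing over $m\le Y$ produces $n^{(\kappa-1)/2}N^{-\kappa+1/2+\varepsilon}$, times a power of $Y$. This is the stated bound when $Y\ll N^{\varepsilon}$, or when the $Y$-dependence is absorbed as in \cite{HM07}. I regard tracking this $Y$-factor honestly as the delicate bookkeeping point of the first estimate.

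\emph{Off-diagonal term for $Nn$.} Here an extra factor $N^{-1/2}$ must be found; this is the main obstacle, since Weil plus $J_{\kappa-1}(x)\ll x$ alone gives only $\sqrt n\,N^{-1+\varepsilon}$. The Bessel argument is $4\pi m\sqrt{Nn}/(Nc')=4\pi m\sqrt n/(\sqrt N c')$. The saving comes from the Kloosterman sum itself.
\begin{itemize}
\item \emph{Case $(c',N)=1$.} Twisted multiplicativity factors $S(m^2,Nn;Nc')$ as $S(\cdot,\cdot;c')\,S(\bar c'^2m^2,0;N)$. Since $(m,N)=1$, the second factor is the Ramanujan sum $c_N(\bar c'^2 m^2)=-1$. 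Hence $|S|\le d(c')(m^2,n,c')^{1/2}c'^{1/2}$, with no $N^{1/2}$.
\item \emph{Case $N\mid c'$.} The Weil bound suffices, because then $c\ge N^2$.
\end{itemize}
In the first case the normalised contribution is $\ll\sum_{m\le Y}m^{-1}\sum_{c'}(Nc')^{-1}c'^{1/2+\varepsilon}\cdot m\sqrt n\,N^{-1/2}c'^{-1}$. This is $\sqrt n\,N^{-3/2+\varepsilon}$, again up to the mild $Y$-dependence. The second case is smaller, and together they give $\frac1{|H^*_\kappa(N)|}\Delta'_{\kappa,N}(Nn)\ll n^{1/2}N^{-3/2+\varepsilon}$.
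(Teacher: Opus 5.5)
The paper gives no proof of this lemma. It is quoted from \cite[Lemma 2.10]{HM07}, which in turn rests on \cite{ILS}, so there is no internal argument to compare against. Your derivation from the explicit formula for $\Delta'_{\kappa,N}$ in Lemma \ref{lemma:delta} is correct in substance. It also isolates the one non-routine point. For $c=Nc'$ with $(c',N)=1$, twisted multiplicativity gives $S(m^2,Nn;Nc')=S(\overline{N}^2m^2,Nn;c')\,c_N(\overline{c'}^2m^2)$ with $c_N(\overline{c'}^2m^2)=-1$. This removes the factor $N^{1/2}$ that the Weil bound loses and upgrades $\sqrt n\,N^{-1}$ to $\sqrt n\,N^{-3/2}$. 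The terms with $N^2\mid c$ are already small enough by Weil. The factor $(m^2,n,c')^{1/2}$ that drops out of your last display is harmless: with $g=(m^2,n)\mid m^2$ one has $\sum_{c'}(g,c')^{1/2}c'^{-3/2+\varepsilon}\ll d(g)\ll Y^{\varepsilon}$, just as in your first part.

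Two of your steps do not deliver the statement as printed. In both cases the printed statement is what is imprecise, not your method.

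\emph{Normalisation.} This cannot be ``read'' away or ``absorbed''. Lemma \ref{lemma:delta} fixes the constant $(\kappa-1)N/12$, and $|H^*_\kappa(N)|-\frac{(\kappa-1)N}{12}$ is $O_\kappa(1)$ but in general not $o(1)$. For example, when $\kappa=2$, $|H^*_2(N)|$ is the genus of $X_0(N)$, which equals $\frac{N}{12}+c$ with $c\in\{\tfrac1{12},-\tfrac5{12},-\tfrac7{12},-\tfrac{13}{12}\}$. So for $\kappa=2$, $n=1$ and, say, $Y\le N^{1/4}$, the left-hand side equals $1-12c/N+O(YN^{-3/2+\varepsilon})$. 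That is not $1+O(N^{-3/2+\varepsilon})$, and a relative error $O(N^{-1/6})$ would be even further from absorbable. What your argument actually proves is one of two versions:
\begin{itemize}
\item the first formula with $\frac{12}{(\kappa-1)N}$ in place of $\frac{1}{|H^*_\kappa(N)|}$; or
\item the formula as printed, but with an additional term $O(\delta_{n,\square_Y}n^{-1/2}N^{-1})$.
\end{itemize}
Either version is all the moment estimates later in the paper require, since they only need upper bounds up to constants.

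\emph{Dependence on $Y$.} This is genuine. Your bounds are $Y^{\kappa-1+\varepsilon}n^{(\kappa-1)/2}N^{-\kappa+1/2}$ for the first estimate and $Y^{1+\varepsilon}n^{1/2}N^{-3/2}$ for the second. So the lemma holds as printed only when $Y\ll N^{\varepsilon}$. Otherwise the power of $Y$ must be carried into the applications, and you should state this explicitly rather than defer to \cite{HM07}.
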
       
\subsection{Sums over primes}
\label{sec2.4}

We have the following standard bounds for sums over rational prime numbers.
\begin{lemma}
\label{RS} Let $x \geq 2$ and $\alpha \geq 0$. We have, for some constant $b$,
\begin{align*}
\sum_{p\le x} \frac{1}{p} =& \log \log x + b+ O\Big(\frac{1}{\log x}\Big), \quad \mbox{and} \\
\sum_{p\le x} \frac {\log p}{p} =& \log x + O(1). 
\end{align*}
\end{lemma}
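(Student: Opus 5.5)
The plan is to deduce both estimates from Chebyshev-type information about primes, in the classical order due to Mertens. I prove the second estimate first and derive the first from it by partial summation.

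\textbf{Step 1: Legendre's formula.} Start from
\begin{align*}
\log (\lfloor x\rfloor !)=\sum_{p\le x}\log p\sum_{m\ge1}\Big\lfloor \frac{x}{p^m}\Big\rfloor .
\end{align*}
By Stirling's formula, or simply by comparing $\sum_{n\le x}\log n$ with $\int_1^x\log t\,\dif t$, the left side equals $x\log x-x+O(\log x)$.

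On the right, I write $\lfloor x/p\rfloor=x/p+O(1)$. The error from this replacement is $O(\sum_{p\le x}\log p)=O(x)$, by Chebyshev's bound $\theta(x)\ll x$. I prove that bound directly: every prime in $(n,2n]$ divides $\binom{2n}{n}\le 4^n$, so $\theta(2n)-\theta(n)\le n\log 4$, and summing over dyadic ranges gives $\theta(x)\ll x$.

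The terms with $m\ge2$ contribute at most
\begin{align*}
x\sum_p \frac{\log p}{p(p-1)}=O(x).
\end{align*}
Dividing everything by $x$ yields $\sum_{p\le x}\log p/p=\log x+O(1)$, which is the second estimate.

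\textbf{Step 2: partial summation.} Set $R(x)=\sum_{p\le x}\log p/p-\log x$, so that $R(x)=O(1)$ by Step 1. Writing $1/p=(\log p/p)\cdot(1/\log p)$ and summing by parts gives
\begin{align*}
\sum_{p\le x}\frac1p=\frac{\log x+R(x)}{\log x}+\int_2^x\frac{\log t+R(t)}{t\log^2 t}\,\dif t .
\end{align*}
The main part of the integral is $\int_2^x \dif t/(t\log t)=\log\log x-\log\log 2$.

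The remainder term is handled by splitting
\begin{align*}
\int_2^x\frac{R(t)}{t\log^2t}\,\dif t=\int_2^\infty\frac{R(t)}{t\log^2t}\,\dif t-\int_x^\infty\frac{R(t)}{t\log^2t}\,\dif t .
\end{align*}
Since $R$ is bounded, the first integral converges absolutely, and the tail is $O(1/\log x)$. The boundary term is $1+O(1/\log x)$.

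Collecting the constants $1-\log\log 2+\int_2^\infty R(t)/(t\log^2 t)\,\dif t$ into $b$ gives the first estimate with error $O(1/\log x)$.

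\textbf{Main obstacle.} The only substantive input is the Chebyshev upper bound $\theta(x)\ll x$, which controls the floor-function errors in Step 1. Everything else is routine bookkeeping. The hypothesis $\alpha\ge0$ in the statement plays no role in either estimate.
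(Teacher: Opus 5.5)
Your proof is correct and is essentially the argument the paper relies on. The paper gives no proof of its own; it cites Theorem 2.7 of a standard text, whose proof is this same classical Mertens route. That route feeds Chebyshev's bound $\theta(x)\ll x$ into the expansion of $\sum_{n\le x}\log n$ over prime powers to get $\sum_{p\le x}\log p/p=\log x+O(1)$, then uses partial summation for $\sum_{p\le x}1/p$, and you carry it out completely. You are also right that the hypothesis $\alpha\ge 0$ in the statement is vestigial and plays no role.
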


\begin{proof}
See, for example, parts (d) and (b) of \cite[Theorem 2.7]{Harper}.
\end{proof}

\subsection{Upper bounds for modular $L$-functions}
\label{sec2.5}

  Let $f \in H^*_{\kappa}(N)$. By a straightforward modification of the proof of \cite[Proposition 2.10]{G&Zhao24-12}, we arrive at the following upper bound for $\log |L(\sigma+it,\chi)|$.
\begin{lemma}
\label{lem: logLbound}
With the notation as above and assume the truth of GRH for $L(s,f)$, let $\sigma$, $t$ and $x \in \rear$ with $\sigma \geq 1/2$ and $x \geq 2$.  Let $\lambda_0=0.4912\ldots$ denote the unique positive real number satisfying $e^{-\lambda_0} = \lambda_0+ \lam^2_0/2$.  Then, for any $\lambda \geq \lam_0$,
\begin{align*}
\log |L(\sigma+it,f)| \le  \Re \sum_{\substack {p^l \leq x \\  l \geq 1}} \frac {(\alpha^{l}_p+\beta^l_p)}{lp^{l(\sigma+it+ \lambda/\log x)}} \frac{\log \leg {x}{p^l}}{\log x}
 + (1+\lam)(\log \sqrt{N}+\log (|t|+2))+ O\Big( \frac{\lambda}{\log x}+1\Big).
\end{align*}
\end{lemma}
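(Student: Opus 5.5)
The plan is to adapt Soundararajan's argument for upper bounds of $\log|L|$ on GRH, in the form used for \cite[Proposition 2.10]{G&Zhao24-12}, to the degree-two setting. The only inputs specific to $L(s,f)$ are three: the functional equation \eqref{equ:FElevelN}, the Hadamard product for $\Lambda(s,f)$, and the Euler product with coefficients $\alpha_p^l+\beta_p^l$ from Section \ref{sec:cusp form}.

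\textbf{Step 1 (zeros and the logarithmic derivative).} Since $\Lambda(s,f)$ is entire of order one, it has a Hadamard product. Using $\epsilon_f=\pm1$ and the symmetry $\rho\mapsto 1-\bar\rho$ of the zeros, we get $\Re\frac{\Lambda'}{\Lambda}(s,f)=\sum_\rho \Re\frac{1}{s-\rho}=:F(s)$. On GRH every zero has $\Re\rho=1/2$, so $F(s)\ge 0$ for $\Re s\ge 1/2$. Taking real parts in the logarithmic derivative of \eqref{equ:FElevelN} gives
\[
-\Re\frac{L'}{L}(s,f)=\log\frac{\sqrt N}{2\pi}+\Re\frac{\Gamma'}{\Gamma}\Big(s+\tfrac{\kappa-1}{2}\Big)-F(s).
\]
Stirling's formula bounds the gamma term by $\log(|t|+2)+O(1)$, with $\kappa$ fixed. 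Set $s=\sigma+it$ and $s_0=\sigma_0+it$ with $\sigma_0=\sigma+\lambda/\log x$. Integrating the display from $\sigma$ to $\sigma_0$ and comparing $\log|s-\rho|$ with $\log|s_0-\rho|$ term by term, as Soundararajan does, yields
\[
\log|L(s,f)|-\log|L(s_0,f)|\le (\sigma_0-\tfrac12)\Big(\log\sqrt N+\log(|t|+2)+O(1)\Big)-c\,(\sigma_0-\tfrac12)F(s_0)
\]
for a suitable absolute constant $c$. This is the only place where GRH is used, through $\Re\rho=1/2$.

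\textbf{Step 2 (explicit formula at $s_0$).} Write $\Lambda_f(p^l)=(\alpha_p^l+\beta_p^l)\log p$. Apply Perron's formula with the kernel $x^w/w^2$ to $-\frac{L'}{L}(s+w,f)$ and shift the contour to the left, picking up residues at the zeros $\rho$ and at the trivial zeros. This gives
\[
-\frac{L'}{L}(s,f)=\sum_{n\le x}\frac{\Lambda_f(n)}{n^{s}}\frac{\log(x/n)}{\log x}+\frac{1}{\log x}\sum_\rho\frac{x^{\rho-s}}{(\rho-s)^2}+O\Big(\frac{1}{\log x}\Big).
\]
Integrate this in $\sigma$ from $\sigma_0$ to $\infty$ to express $\log|L(s_0,f)|$. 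The prime sum becomes exactly the Dirichlet polynomial in the statement, with $\sigma+it+\lambda/\log x=s_0$ in the exponent. The zero sum is bounded in absolute value using $|x^{\rho-s_0}|\le x^{-(\sigma_0-1/2)}\le e^{-\lambda}$ together with $\sum_\rho |s_0-\rho|^{-2}=F(s_0)/(\sigma_0-\tfrac12)$. This produces an error of the form $\frac{e^{-\lambda}}{\lambda}\cdot\frac{F(s_0)}{\sigma_0-1/2}\cdot\frac{1}{\log x}$, up to constants. Since $\sigma_0-\tfrac12\ge\lambda/\log x$, this error is at most a multiple of $\lambda^{-2}e^{-\lambda}F(s_0)\,(\sigma_0-\tfrac12)$.

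\textbf{Step 3 (combining).} Adding Steps 1 and 2, the coefficient of $F(s_0)$ in the combined bound has the shape $(\sigma_0-\tfrac12)\big(e^{-\lambda}\lambda^{-2}\cdot(\text{const})-c\big)$. The defining relation $e^{-\lambda_0}=\lambda_0+\lambda_0^2/2$ is precisely the threshold at which, after the careful bookkeeping of the constants in Steps 1 and 2, this coefficient becomes nonpositive for $\lambda\ge\lambda_0$. Hence the $F$-terms can be discarded, since $F(s_0)\ge 0$. The remaining conductor term $(\sigma_0-\tfrac12)(\log\sqrt N+\log(|t|+2))$ is then bounded crudely by $(1+\lambda)(\log\sqrt N+\log(|t|+2))$. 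This is possible because the statement allows this weak factor, so we need not track $1/\log x$ savings. All leftover contributions are $O(\lambda/\log x+1)$.

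The main obstacle is Step 3: getting the constants in Steps 1 and 2 exactly right so that the cancellation of $F(s_0)$ happens precisely for $\lambda\ge\lambda_0$. I would copy the computation from \cite[Proposition 2.10]{G&Zhao24-12} essentially verbatim, checking that the degree-two gamma factor and the level-$N$ conductor only alter the non-$F$ terms.
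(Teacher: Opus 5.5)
Your route is the paper's own: the paper gives no details beyond invoking a straightforward modification of \cite[Proposition 2.10]{G&Zhao24-12}, which is Soundararajan's argument as you outline it. However, the explicit formula in your Step 2 is wrong, and that is exactly why your Step 3 cannot close. The kernel $x^w/w^2$ has a double pole at $w=0$. There the residue of $-\frac{L'}{L}(s+w,f)\frac{x^w}{w^2}$ is $-\frac{L'}{L}(s,f)\log x-\big(\frac{L'}{L}\big)'(s,f)$, so the identity must contain the additional term $\frac{1}{\log x}\big(\frac{L'}{L}\big)'(s,f)$. This term is not $O(1/\log x)$; it is large near zeros. Integrating it over $[\sigma_0,\infty)$ gives $-\frac{1}{\log x}\Re\frac{L'}{L}(s_0,f)\le\frac{1}{\log x}\big(\log\sqrt N+\log(|t|+2)+O(1)-F(s_0)\big)$, by your own Step 1 identity. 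This term is the source of the ``$1$'' in $1+\lambda$ and, crucially, of a gain $-F(s_0)/\log x$. Without it, your two remaining $F$-contributions combine to $\frac{F(s_0)}{\log x}\big(\frac{e^{-\lambda}}{\lambda}-\frac{\lambda}{2}\big)$, which is positive for $\lambda_0\le\lambda<0.90\ldots$. The relation $e^{-\lambda_0}=\lambda_0+\lambda_0^2/2$ has three terms because there are three sources, so it cannot come out of your bookkeeping.

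The normalizations are also off: every $F$-term should be compared with $F(s_0)/\log x$, not with $(\sigma_0-\frac12)F(s_0)$. Write $a=\sigma-\frac12$ and $b=\sigma_0-\frac12$. The zero sum is at most $\frac{x^{-b}}{b\log^2 x}F(s_0)\le\frac{e^{-\lambda}}{\lambda}\cdot\frac{F(s_0)}{\log x}$. Your expression carries a spurious factor $\log x/\lambda$, and the claimed bound by a multiple of $\lambda^{-2}e^{-\lambda}(\sigma_0-\frac12)F(s_0)$ does not follow. In Step 1, using $\log u\ge 1-1/u$, one gets $\int_\sigma^{\sigma_0}F(u+it)\,du=\frac12\sum_\rho\log\frac{b^2+(t-\gamma)^2}{a^2+(t-\gamma)^2}\ge\frac{b^2-a^2}{2b}F(s_0)\ge\frac{\lambda}{2\log x}F(s_0)$. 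Your lower bound $c(\sigma_0-\frac12)F(s_0)$ fails once $a\gg\lambda/\log x$: a zero with $\gamma=t$ contributes $\log(b/a)\to 0$ to the left side but $c$ to the right. Likewise the conductor term in Step 1 has coefficient $\sigma_0-\sigma=\lambda/\log x$, not $\sigma_0-\frac12$; with the latter, your crude bound by $1+\lambda$ is false for large $\sigma$.

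With these corrections, the coefficient of $F(s_0)$ is $\frac{1}{\log x}\big(\frac{e^{-\lambda}}{\lambda}-1-\frac{\lambda}{2}\big)\le 0$ for $\lambda\ge\lambda_0$. The conductor contribution becomes $\frac{1+\lambda}{\log x}(\log\sqrt N+\log(|t|+2))$, which is the form Lemma \ref{lem: logLboundsimplified} actually needs (note its term $\frac{\log N}{\log x}$). The missing $1/\log x$ in the displayed statement is presumably a typo, so relying on that weakness to avoid tracking $1/\log x$ is not a safe shortcut. Also, GRH is used in Step 2 as well, for $|x^{\rho-s_0}|\le x^{-b}$ and for $\sum_\rho|s_0-\rho|^{-2}=F(s_0)/b$, not only in Step 1.
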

  We apply the above lemma and argue as in the proof of \cite[Corollary 2.11]{G&Zhao24-12} to deduce the following bound for sums of $\log |L(1/2+it,f)|$ over various $t$.
\begin{lemma}
\label{lem: logLboundsimplified}
 With the notation as above and the truth of GRH for $L(s,f)$, let $k$ be a positive integer and $Q$, $a_1$, $a_2, \ldots, a_{k}$ be fixed positive real constants, $x\geq 2$.  Set $a:=a_1+\cdots+ a_{k}$.  Suppose $N$ is a integer,  $t_1,\ldots, t_{k}$ are fixed real numbers satisfying $|t_i|\leq N^A$. For any integer $n$, define
\[ h(n):=\frac{1}{2} \Big( \sum^{k}_{m=1}a_mn^{-it_m} \Big). \]
Then, for any $f \in H^*_{\kappa}(N)$ and $0 \leq \sigma -1/2 \ll 1/\log x$, we have
\begin{align}
\label{logLupperboundnonquad}
\begin{split}
\sum^{k}_{m=1} & a_m\log |L(\sigma+it_m,f)| \\
& \leq 2\Re \sum_{p\leq x} \frac{h(p)\lambda_f(p)}{p^{1/2+\max(\sigma-1/2, 1/\log x)}}\frac{\log x/p}{\log x}
    -\Re\sum_{p\leq \min (x^{1/2}, \log N)} \frac{h(p^2)(\lambda_f(p^2)-1)}{p}+(A+1)a\frac{\log N}{\log x}+O(1).
\end{split}
\end{align}
\end{lemma}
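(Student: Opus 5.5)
The plan is to derive the lemma directly from Lemma \ref{lem: logLbound}. For each $m$ we apply that lemma at the point $\sigma+it_m$, multiply by $a_m$, and sum over $m$. The parameter $\lambda\ge\lambda_0$ is chosen depending on $\sigma$ so that the exponent $\sigma+\lambda/\log x$ matches $1/2+\max(\sigma-1/2,1/\log x)$ as closely as possible.

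\textbf{Step 1: the $\log N$ term.} The non-arithmetic terms of Lemma \ref{lem: logLbound} sum to
\[
\sum_m a_m\Big((1+\lambda)(\log\sqrt N+\log(|t_m|+2))+O(1)\Big).
\]
Since $|t_m|\le N^A$, this has size $\ll (1+\lambda)\, a\,(A+1)\log N$, which is far too large as it stands. The intended fix is the Harper-type dependence on $\log x$: Lemma \ref{lem: logLbound} comes from an explicit formula in which the archimedean and conductor contribution appears divided by $\log x$. Following \cite[Proposition 2.10]{G&Zhao24-12}, I would redo the derivation keeping that factor, so that the term becomes $(A+1)a\log N/\log x+O(1)$ as stated.

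\textbf{Step 2: prime terms with $l=1$.} For $p\nmid N$ we have $\alpha_p+\beta_p=\lambda_f(p)$, which is real. Hence
\[
\sum_m a_m\,\Re\, p^{-it_m}\lambda_f(p)=2\,\Re\, h(p)\lambda_f(p).
\]
The single prime $p=N$ contributes $O(1)$ because $|\lambda_f(N)|\le 1/\sqrt N$. The remaining issue is the exponent. If $\sigma-1/2\le(1-\lambda_0)/\log x$, take $\lambda=1-(\sigma-\tfrac12)\log x\ge\lambda_0$; then the exponent is exactly $1/2+1/\log x$. If $\sigma-1/2$ is larger, Lemma \ref{lem: logLbound} has to be invoked at a shifted point (or with $\lambda=\lambda_0$) so that the exponent becomes $\sigma$ itself. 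The discrepancy $p^{-\lambda_0/\log x}$ is then absorbed by running the explicit-formula argument with the smoothing weight adjusted, exactly as in \cite[Corollary 2.11]{G&Zhao24-12}.

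\textbf{Step 3: prime squares.} For $p\nmid N$ we have $\alpha_p^2+\beta_p^2=\lambda_f(p^2)-1$ by Lemma \ref{lemma:cubicclass}, and $\tfrac12\sum_m a_m p^{-2it_m}=h(p^2)$. So the $l=2$ terms give a sum over $p\le x^{1/2}$ of $h(p^2)(\lambda_f(p^2)-1)/p$, times weights $1+O(\log p/\log x)$ and $p^{-2(\sigma-1/2)}=1+O(\log p/\log x)$. These weight corrections contribute $O(1)$ by Lemma \ref{RS}.

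\textbf{Step 4: higher powers.} The terms with $l\ge3$ converge absolutely and contribute $O(1)$. This uses $|\alpha_p|,|\beta_p|\le1$ for $p\nmid N$, with the single prime $p=N$ handled trivially.

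\textbf{Step 5, the main obstacle: truncating the prime-square sum at $\log N$.} It remains to reduce the prime-square range from $p\le x^{1/2}$ to $p\le\min(x^{1/2},\log N)$. The $\lambda_f(p^2)$ part is split into two ranges.
\begin{itemize}
\item On $\log N<p\le(\log N)^6$, bound trivially using $|\lambda_f(p^2)|\le3$ and Lemma \ref{RS}. This gives $O(\log 6)=O(1)$.
\item On $p>(\log N)^6$, use GRH for $L(s,\mathrm{sym}^2 f)$, which gives $\sum_{p\le y}\lambda_f(p^2)\log p\ll y^{1/2}\log^2(Ny)$. Partial summation then yields $\ll(\log N)^{-3}\log^2 N=o(1)$.
\end{itemize}
The constant part $-\sum h(p^2)/p$ over $\log N<p\le x^{1/2}$ is of Mertens/zeta type and is not small for $t_m$ near $0$. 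My plan is to check its sign against the convention in the statement: with the minus sign on the retained term, the discarded piece is $\le O(1)$ when $\Re h(p^2)$ is dominated by the $t_m$ with small shifts. If the sign does not cooperate, I would instead keep that piece and show it is bounded, comparing it with $\log|\zeta(1+2it_m+1/\log N)|$ restricted to $p>\log N$, which is $O(1)$ uniformly. This sign and uniformity check is the delicate point of the argument.
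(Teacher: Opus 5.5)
Your route is the one the paper intends: apply Lemma \ref{lem: logLbound} at each $\sigma+it_m$, weight by $a_m$, and sort prime powers by $l$. The paper gives no details and simply defers to its earlier Corollary 2.11. Steps 3--4 are fine, and so is the $\lambda_f(p^2)$ half of Step 5, provided the symmetric-square GRH bound includes the twist $p^{-2it_m}$.

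\textbf{The gap.} The gap is the Mertens-type piece in Step 5, and it is tied to a sign you never reconcile. Step 3 correctly gives the $l=2$ contribution as $+\Re\sum_{p\le x^{1/2}}h(p^2)(\lambda_f(p^2)-1)/p$, since $\alpha_p^2+\beta_p^2=\lambda_f(p^2)-1$. The statement, however, has a minus sign. The two versions differ by $2\Re\sum_{p\le\min(x^{1/2},\log N)}h(p^2)(\lambda_f(p^2)-1)/p$. For an individual $f$ this has no reason to be $O(1)$: its trivial bound is $2a\log\log\log N$, and nothing forces cancellation. So the minus-sign inequality does not follow from Lemma \ref{lem: logLbound}. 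You should say plainly that you prove the plus-sign version and treat the printed sign as a slip; the plus sign is also the one whose average over $f$ leaves the orthogonal term $-\sum_p h(p^2)/p$.

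Your fallback is also false. The piece $-\Re\sum_{\log N<p\le x^{1/2}}h(p^2)/p$ cannot be kept and shown bounded: with all $t_m=0$ and $x=N^{\alpha_j}$ it is about $-\tfrac a2\log\log N$. Correspondingly, the $p>\log N$ part of $\log|\zeta(1+2it+1/\log N)|$ is $\sim\log\log N$ at $t=0$, not $O(1)$.

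\textbf{How to close Step 5.} This piece is $\le O(1)$ for \emph{every} choice of shifts, not only when small $t_m$ dominate, so it can simply be dropped. Since $a_m>0$, it suffices to show $\Re\sum_{z<p\le w}p^{-1-2it}\ge -O(1)$ uniformly for $w\ge z\ge\log N$ and $|t|\le N^A$. On $\log N<p\le(\log N)^5$, use Lemma \ref{RS} trivially. Beyond that, RH for $\zeta$ gives $\sum_{p\le y}p^{-2it}\log p=\int_2^y u^{-2it}\,\mathrm{d}u+O(\sqrt y\log^2(Ny))$. Partial summation then costs $o(1)$, and the main term has real part $\int_{2|t|\log z}^{2|t|\log w}\frac{\cos s}{s}\,\mathrm{d}s=\mathrm{Ci}(2|t|\log w)-\mathrm{Ci}(2|t|\log z)$. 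This is bounded below by an absolute constant (about $-0.67$), or equals $\log(\log w/\log z)\ge 0$ if $t=0$. This use of RH for $\zeta$, like your use of GRH for $L(s,\mathrm{sym}^2 f)$, goes beyond the stated hypothesis.

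\textbf{Two smaller corrections.} First, the constant. Once the missing factor $1/\log x$ is restored in Lemma \ref{lem: logLbound}, the choice $\lambda=1$ gives $(1+\lambda)\sum_m a_m\log(\sqrt N(|t_m|+2))/\log x\le (2A+1)a\log N/\log x+O(1)$, not $(A+1)a\log N/\log x$. This is harmless later, but it is not ``as stated''.

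Second, the range $\sigma-\tfrac12>(1-\lambda_0)/\log x$. Invoking the lemma at a shifted abscissa bounds $\log|L|$ at the wrong point, and the factor $p^{-\lambda_0/\log x}$ cannot be absorbed term by term. Instead, rerun the proof of the lemma. With $c=(\sigma-\tfrac12)\log x$, the zero-sum term can be discarded once $\frac{\lambda(2c+\lambda)}{2(c+\lambda)}+1\ge\frac{e^{-(c+\lambda)}}{c+\lambda}$. This holds for $\lambda=\max(1-c,0)$ and gives exactly the exponent $\tfrac12+\max(\sigma-\tfrac12,1/\log x)$.
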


   We also note the following upper bounds on moments of modular $L$-functions.
\begin{lemma}
\label{prop: upperbound}
With the notation as above and the truth of GRH, let $k\geq 1$ be a fixed integer and ${\bf a}=(a_1,\ldots ,a_{k}),\ t=(t_1,\ldots ,t_{k})$
 be real $k$-tuples such that $a_i \geq 0$ and that $|t_i|\leq N^A$ for all $i$. Then for large prime number $N$ and $\sigma \geq 1/2$,
\begin{align*}
   \sum_{f \in H^*_{\kappa}(N)} \big| L\big(\sigma+it_1,f \big) \big|^{a_1} \cdots \big| L\big(\sigma+it_{k},f \big) \big|^{a_{k}} \ll_{{\bf a}} &  N(\log N)^{O(1)}.
\end{align*}
\end{lemma}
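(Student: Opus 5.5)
The plan is Soundararajan's method in a crude form, where we only aim for a loss of $(\log N)^{O(1)}$. Write $\mathcal{L}(f):=\sum_{j=1}^k a_j\log|L(\sigma+it_j,f)|$, so that the sum in question is $\sum_f e^{\mathcal{L}(f)}$. Split it according to the size of $\mathcal{L}(f)$ by setting $S(V):=\{f\in H^*_\kappa(N):\ \mathcal{L}(f)\ge V\}$. Then
\[
\sum_f e^{\mathcal{L}(f)}\le e^{V_0}|H^*_\kappa(N)|+\sum_{V\ge V_0,\,V\in\intz} e^{V+1}|S(V)|,
\]
and by \eqref{Hpmsize} the first term is $\ll N(\log N)^{O(1)}$ once we take $V_0=C\log\log N$. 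Under GRH, Lemma \ref{lem: logLbound} with $x$ a small power of $\log N$ gives the pointwise bound $\mathcal{L}(f)\ll \log N/\log\log N$ (using $|t_j|\le N^A$). Hence $S(V)$ is empty for $V\ge c_0\log N/\log\log N$, and only the range $V_0\le V\ll \log N/\log\log N$ needs attention.

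For such $V$ we apply Lemma \ref{lem: logLboundsimplified} with $x=N^{\theta/V}$, where $\theta$ is small in terms of $A$ and $a$. The term $(A+1)a\log N/\log x$ is then at most $V/4$. The sum over $p\le\min(x^{1/2},\log N)$ of $h(p^2)(\lambda_f(p^2)-1)/p$ is $\ll\log\log\log N$, since $|\lambda_f(p^2)|\le3$, so it is negligible against $V$. Consequently every $f\in S(V)$ satisfies $|P_x(f)|\ge V/2$, where
\[
P_x(f):=\sum_{p\le x}\frac{h(p)\lambda_f(p)}{p^{1/2+\max(\sigma-1/2,1/\log x)}}\frac{\log (x/p)}{\log x}.
\]
By Chebyshev's inequality, for any $m$ we get $|S(V)|\le (2/V)^{2m}\sum_f|P_x(f)|^{2m}$. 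The natural choice is $m=\lfloor V/(10\theta)\rfloor$, which keeps $x^{2m}\le N^{1/5}$.

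The key step, and the main obstacle, is the moment bound
\[
\sum_f|P_x(f)|^{2m}\ll N\frac{(2m)!}{2^m m!}\Big(\sum_{p\le x}\frac{|h(p)|^2}{p}\Big)^m+O(N^{1-\delta}),
\]
valid uniformly for $x^{2m}\le N^{1/5}$. To prove it, expand $|P_x|^{2m}=P_x^m\overline{P_x}^m$ (the $\lambda_f(p)$ are real) and use Lemma \ref{lemma:cubicclass} to write each product $\prod\lambda_f(p_i)$ as a combination of $\lambda_f(n)$ with $n\le x^{2m}<N$ and $(n,N)=1$. Then sum over $f$ using Lemmas \ref{lemma:delta} and \ref{lemma:deltasimplified}. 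The $\delta_{n,\square_Y}$ term, with $Y=N^{1/5}$ say, survives only when the primes pair up, which produces the Gaussian moment; this is bounded by $N(2m)!/(2^m m!)\,(\sum_p|h(p)|^2/p)^m$ after comparing with $\mathbb{E}[\lambda_f(p)^{2}]$-type counts. The $N^{-\kappa+1/2}$ error and the $\Delta^\infty$ error, the latter taken with $X=N^{1-\varepsilon}$, give $O(N^{1-\delta})$. Some care is needed to check the hypothesis of Lemma \ref{lemma:delta} on the coefficients $a_m$, which are bounded multiplicative-type weights supported on $m\le x^{2m}$.

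By Lemma \ref{RS}, $\sum_{p\le x}|h(p)|^2/p\le \tfrac{a^2}{4}\log\log N+O(1)$. Inserting this, we obtain
\[
|S(V)|\ll N\Big(\frac{C_1 m\log\log N}{V^2}\Big)^m+N^{1-\delta}\ll N\Big(\frac{C_2\log\log N}{V}\Big)^{V/(10\theta)}+N^{1-\delta}.
\]
For $V\ge V_0=C\log\log N$ with $C$ large in terms of $\theta$ and $a$, the quantity $e^{V}(C_2\log\log N/V)^{V/(10\theta)}$ decays geometrically in $V$. The $N^{1-\delta}$ terms, multiplied by $e^{V}\le \exp(c_0\log N/\log\log N)$ and summed over $\ll\log N$ values of $V$, contribute $o(N)$. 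Summing over $V$ therefore gives $\sum_f e^{\mathcal{L}(f)}\ll N(\log N)^{C}$, which is the lemma.
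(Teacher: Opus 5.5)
Your route is the paper's: level sets of $\log|L|$, $x=N^{\theta/V}$, Chebyshev with a $2m$-th moment of the prime polynomial with $m\asymp V/\theta$ (so $x^{2m}\le N^{1/5}$), the moment computed from Lemmas \ref{lemma:cubicclass}, \ref{lemma:delta}, \ref{lemma:deltasimplified}, and a threshold $V_0\asymp\log\log N$. The paper takes $\theta=10(A+1)$ and $m=\lceil V/(100(A+1))\rceil$, after first reducing by H\"older to a single $|L(\sigma+it,f)|^{2k}$. Your direct treatment of the weighted combination is legitimate, and so is your GRH cap $V\ll\log N/\log\log N$, which keeps $x\ge 2$ and $e^{V}=N^{o(1)}$ (the paper passes over this).

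However, two steps fail as written.

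(i) The size of $\theta$ is backwards. Since $\log N/\log x=V/\theta$, the conductor term in Lemma \ref{lem: logLboundsimplified} equals $(A+1)aV/\theta$, and this is at most $V/4$ only if $\theta\ge 4(A+1)a$. With $\theta$ small it swamps $V$, and you cannot deduce $|P_x(f)|\gg V$. Nothing else in your argument uses smallness of $\theta$: your $m$ still gives $x^{2m}\le N^{1/5}$, and $V_0$ may depend on $\theta$. So simply take $\theta$ large.

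(ii) With $Y=N^{1/5}$ the $\Delta^\infty$ error is not $O(N^{1-\delta})$. To meet the hypothesis of Lemma \ref{lemma:delta} you must divide the coefficients by $B=\max_f|P_x(f)|^{2m}$. The only available bound, $B\le (a\sum_{p\le x}p^{-1/2})^{2m}$, is of size $N^{1/10-o(1)}$ when $x^{2m}\approx N^{1/5}$. This cancels the saving $Y^{-1/2}=N^{-1/10}$ and leaves roughly $N^{1+O(\varepsilon)}$. For $V$ near $V_0$ the factor $(4/V)^{2m}e^{V}$ is only $N^{\pm o(1)}$, so it cannot compensate. Take $Y=N^{1/2}$, say, or shorten the polynomial to $x^{2m}\le N^{1/20}$.

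Two smaller points.

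First, Lemma \ref{lem: logLboundsimplified} is stated only for $0\le\sigma-1/2\ll 1/\log x$, which here means $\sigma-1/2\ll V/\log N$. For general $\sigma\ge 1/2$, return to Lemma \ref{lem: logLbound} and bound the prime-square terms trivially by $\sum_{p\le\sqrt{x}}1/p\le\log\log N+O(1)$; this is absorbed by $V_0$, as in the paper.

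Second, the diagonal is not only exact pairings. The term $\delta_{n,\square_Y}/\sqrt{n}$ gives each tuple whose multiplicities $e_p$ are all even the weight $\prod_p\sum_r\big(\binom{e_p}{e_p/2-r}-\binom{e_p}{e_p/2-r-1}\big)p^{-r}$, for example $1+1/p$ when $e_p=2$. This weight is at most $4^m$, so your moment bound holds with an extra factor $C^m$, which your $C_1$ absorbs.

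With these corrections the argument goes through.
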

\begin{proof}
By H\"older’s inequality, it suffices to show that for any fixed integer $k \geq 1$ and any real  $|t|\leq N^A$, 
\begin{align*}
   \sum_{f \in H^*_{\kappa}(N)}\big| L\big(\sigma+it,f \big) \big|^{2k}  \ll_{{k}} &  N(\log N)^{O(1)}.
\end{align*}

   We deduce from Lemma \ref{RS}, Lemma \ref{lem: logLbound} and the bound $\lambda_f(p^2) \leq 3$ that for $x \leq N$, 
\begin{align}
\label{mainupper1}
\begin{split}
  \log |L(\sigma+it,f)| \leq  \Re \sum_{p\leq x} \frac{\lambda_f(p)}{p^{1/2+\max(\sigma-1/2, 1/\log x)+it}}\frac{\log x/p}{\log x}
    +2\log \log X+(A+1)\frac{\log X}{\log x}+O(1).
\end{split}
\end{align}

  Let $\mathcal{N}(V)$  be the number of $f \in H^*_{\kappa}(N)$ such that $\log|L(\sigma+it, f)|\geq V$. 
  Suppose that $f$ is counted by $\mathcal{N}(V)$ and also that $V \geq \max \{8\log \log X,10(A+1)\}$. We set $x=X^{10(A+1)/V}$ in \eqref{mainupper1}, so for such $f$, we have
\begin{align*}
\begin{split}
 \Re \sum_{p\leq x} \frac{\lambda_f(p)}{p^{1/2+\max(\sigma-1/2, 1/\log x)+it}}\frac{\log x/p}{\log x} \geq \frac V2.
\end{split}
\end{align*}

  We take $m = \lceil V/(100(A+1))\rceil $, where $\lceil x \rceil = \min \{ n \in \intz : n \geq x\}$, getting
\begin{align*} 
\begin{split}
  \Big( \frac {V}2 \Big)^{2m}  \mathcal{N}(V) \leq & \sum_{f \in H^*_{\kappa}(N)} \Big| \Re \sum_{p\leq x} \frac{\lambda_f(p)}{p^{1/2+\max(\sigma-1/2, 1/\log x)+it}}\frac{\log x/p}{\log x} \Big|^{2m} \ll N(2m-1)!!\Big (\sum_{p \leq x} \frac {1}{p}\Big )^{m},
\end{split}
\end{align*}
  where the last estimation above follows from the arguments given in the proof of \cite[Lemma 3.1]{HM07}, upon using Lemma \ref{lemma:delta} and Lemma \ref{lemma:deltasimplified}. \newline

  Now Stirling's formula (see \cite[(5.112)]{iwakow}) implies that
\begin{align*}
\begin{split}
 \Big( \frac me \Big)^m \leq m! \leq \sqrt{m} \Big( \frac {m }{e} \Big)^{m}.
\end{split}
\end{align*}
  It follows from this that
\begin{align*}
\begin{split}
 (2m-1)!!=\frac {(2m)!}{2^m m!}  \leq \sqrt{2m} \Big( \frac {2m }{e} \Big)^{m}.
\end{split}
\end{align*}  

  We then apply Lemma \ref{RS} to deduce from the above that for some constant $C_0$, 
\begin{align*} 
\begin{split}
 & \Big(\frac {V}2 \Big)^{2m}  \mathcal{N}(V)
 \ll  N  \sqrt{2m} \Big( \frac {2m }{e} \Big)^{m}\Big (\log \log N+C_0 \Big )^{m}.
\end{split}
\end{align*}
  
Thus we infer that if $V \geq e^{10000(A+1)(k+1)}(\log \log N+C_0 )$, then
\begin{equation}
\label{equ:bd-S-2}
\mathcal{N}(V)  \ll N\operatorname{exp}\left(-4kV\right).
\end{equation}

   We now set $V_0=\max \{ e^{10000(A+1)(k+1)}(\log \log N+C_0 ), 8\log \log X,10(A+1)\}$, so that the estimation given in \eqref{equ:bd-S-2} holds when $V \geq V_0$. Note moreover we have $\mathcal{N}(V) \ll N$ by \eqref{Hpmsize}. It then follows by partial summation that
\begin{align*}
   \sum_{f \in H^*_{\kappa}(N)}\big| L\big(\sigma+it,f\big) \big|^{2k} 
   \ll Ne^{2kV_0}+\sum^{\infty}_{V=V_0}\mathcal{N}(V)e^{(V+1)2k} \ll  N(\log N)^{O(1)}.
\end{align*}  
  This completes the proof of the lemma.
\end{proof}

\section{Proof of Theorem \ref{t1} }
\label{sec:upper bd}

Setting $\sigma=1/2$ in \eqref{logLupperboundnonquad} and exponentiating both sides, we get
\begin{align}
\label{basicest}
\begin{split}
 \big|  L & \big( \tfrac12+it_1,f\big) \big|^{a_1} \cdots \big| L\big(\tfrac12+it_{k},f \big) \big|^{a_{k}} \\
 & \ll   \exp \left( \Re\sum_{\substack{  p \leq x }} \frac{2h(p)\lambda_f(p)}{p^{1/2+1/\log x}}
 \frac{\log (x/p)}{\log x} -
 \Re \sum_{\substack{  p \leq  \min (x^{1/2}, \log X) }} \frac{h(p^2)(\lambda_f (p^2)-1)}{p} 
 +(A+1)a\frac{\log N}{\log x}\right) .
\end{split}
 \end{align}
 
Follwoing the approach of A. J. Harper in \cite{Harper}, we define for a large number $N$,
$$ \alpha_{0} = \frac{\log 2}{\log N}, \;\;\;\;\; \alpha_{i} = \frac{20^{i-1}}{(\log\log N)^{2}} \;\;\; \mbox{for all} \; i \geq 1, \quad
\mathcal{J} = \mathcal{J}_{k,N} = 1 + \max\{i : \alpha_{i} \leq 10^{-T} \} . $$

   We set
\[ {\mathcal M}_{i,j}(f) = \sum_{N^{\alpha_{i-1}} < p \leq N^{\alpha_{i}}}  \frac{2h(p)\lambda_f (p)}{p^{1/2+1/(\log N^{\alpha_{j}})}} \frac{\log (N^{\alpha_{j}}/p)}{\log N^{\alpha_{j}}}, \quad 1\leq i \leq j \leq \mathcal{J} , \]
and
\[ P_{m}(f )= -\sum_{2^{m} < p \leq 2^{m+1}} \frac{h(p^2)(\lambda_f (p^2)-1)}{p}, \quad  0 \leq m \leq \frac{\log\log N}{\log 2}. \]

 We also define for $0 \leq j \leq \mathcal{J}$,
\begin{align*}
 \mathcal{S}(j) =& \left\{ f \in H^*_{\kappa}(N) : | {\mathcal M}_{i,l}(f)| \leq \alpha_{i}^{-3/4} \; \; \mbox{for all}  \; 1 \leq i \leq j, \; \mbox{and} \; i \leq l \leq \mathcal{J}, \right. \\
 & \left. \hspace*{2cm} \text{but }  | {\mathcal M}_{j+1,l}(f)| > \alpha_{j+1}^{-3/4} \; \text{ for some } j+1 \leq l \leq \mathcal{J} \right\} . \\
 \mathcal{S}(\mathcal{J}) =& \left\{ f \in H^*_{\kappa}(N) : |{\mathcal M}_{i, \mathcal{J}}(f)| \leq \alpha_{i}^{-3/4} \; \mbox{for all}  \; 1 \leq i \leq \mathcal{J} \right\}, \; \mbox{and} \\
\mathcal{P}(m) =&  \left\{ f \in H^*_{\kappa}(N) : | P_{m}(f)| > 2^{-m/10} , \; \text{but} \; | P_{n}(f)| \leq 2^{-n/10} \; \mbox{for all} \; m+1 \leq n \leq \frac{\log\log N}{\log 2} \right\}.
\end{align*}

    We shall set $x=N^{\alpha_j}$ for $j \geq 1$ in \eqref{basicest} in what follows.  So we may assume that the second sum on the right-hand side of \eqref{basicest} is over $p \leq \log N$.  Then $|P_{n}(f)| \leq 2^{-n/10}$ for all $n$ if $f\not \in \mathcal{P}(m)$ for any $m$, which implies that
\[ \Re \sum_{\substack{  p \leq \log N }}  \frac{h(p^2)(\lambda_f (p^2)-1)}{p} 
= O(1). \]
 As the treatment for case $f \not \in \mathcal{P}(m)$ for any $m$ is easier compared to the other cases, we may assume that $f \in \mathcal{P}(m)$ for some $m$.   We further note that
$$ \mathcal{P}(m)= \bigcup_{m=0}^{\log \log N/2}\bigcup_{j=0}^{ \mathcal{J}} \Big (\mathcal{S}(j)\bigcap \mathcal{P}(m) \Big ), $$
so that it suffices to show that
\begin{align}
\label{sumovermj}
\begin{split}
  & \sum_{m=0}^{\log \log N/2}\sum_{j=0}^{\mathcal{J}}\sum_{\chi \in \mathcal{S}(j)\bigcap \mathcal{P}(m)} \big| L\big(1/2+it_1,f\big) \big|^{a_1} \cdots \big| L\big(1/2+it_{k},f \big) \big|^{a_{k}} \ll  N.
\end{split}
\end{align}

   Observe that
\begin{align*}
\text{meas}(\mathcal{P}(m)) \leq \sum_{f \in H^*_{\kappa}(N)}
\Big (2^{m/10} |P_m(f)| \Big )^{2\lceil 2^{m/2}\rceil }.
\end{align*}
Following the proof of Lemma \ref{prop: upperbound} together with the estimation that $|h(p^2)(\lambda_f(p^2)-1)| \leq 2a$, we see that for $m$ large enough in terms of $a$,
\begin{align}
\label{Pmest}
\begin{split}
  \text{meas}(\mathcal{P}(m)) \ll & N(2^{m/10})^{2\lceil 2^{m/2}\rceil } \Big (\sum_{2^m < p } \frac {4a^2}{p^2}\Big )^{\lceil 2^{m/2}\rceil }
    \ll N 2^m (4a^22^{-m+m/5})^{2^{m/2}} \ll N2^{-2^{m/2}}.
\end{split}
 \end{align}
Then the Cauchy-Schwarz inequality, Lemma \ref{prop: upperbound} and \eqref{Pmest} yield that if $2^{m} \geq (\log\log N)^{3}$ and $N$ large enough,
\begin{align*}
 \sum_{f \in  \mathcal{P}(m)}  \big| L\big( \tfrac12+it_1,f\big) \big|^{a_1} \cdots \big| L\big(\tfrac12+it_{k},f \big) \big|^{a_{k}} 
\leq & \left( \text{meas}(\mathcal{P}(m)) \cdot
\sum_{f \in H^*_{\kappa}(N)}\big| L\big(\tfrac12+it_1,f \big) \big|^{2a_1} \cdots \big| L\big(\tfrac12+it_{k},f \big) \big|^{2a_{k}} \right)^{1/2}
 \\
 \ll & \left( N \exp\left( -(\log 2)(\log\log N)^{3/2} \right) N (\log N)^{O(1)} \right)^{1/2} \ll N.
\end{align*}

   The above now allows us to focus on the case $0 \leq m \leq (3/\log 2)\log\log\log N$.  Similarly,
\begin{align}
\label{S0est}
\begin{split}
\text{meas}(\mathcal{S}(0)) \ll & \sum_{f \in H^*_{\kappa}(N)}
\sum^{\mathcal{J}}_{l=1}
\Big ( \alpha^{3/4}_{1}{|\mathcal
M}_{1, l}(f)| \Big)^{2\lceil 1/(10\alpha_{1})\rceil }=
\sum^{\mathcal{J}}_{l=1}\sum_{f \in H^*_{\kappa}(N)}\Big ( \alpha^{3/4}_{1}{|\mathcal
M}_{1, l}(f)| \Big)^{2\lceil 1/(10\alpha_{1})\rceil } .
\end{split}
\end{align}
   Note that
\begin{align*}
 \mathcal{J} \leq \log\log\log N , \; \alpha_{1} = \frac{1}{(\log\log N)^{2}} , \; \mbox{and} \; \sum_{p \leq N^{1/(\log\log N)^{2}}} \frac{1}{p} \leq \log\log N ,
\end{align*}
  where the last bound follows from Lemma \ref{RS}. We apply these estimates and argue as in the proof of Lemma \ref{prop: upperbound} to evaluate the last sums in \eqref{S0est} above in a manner similar to the computation in \eqref{Pmest}.  This renders
\begin{align*}
\text{meas}(\mathcal{S}(0)) \ll &
\mathcal{J}N e^{-1/\alpha_{1}}\ll N e^{-(\log\log N)^{2}/10}  .
\end{align*}
Using the Cauchy-Schwarz inequality, Proposition \ref{prop: upperbound} and the above, we arrive at
\begin{align*}
\sum_{f \in  \mathcal{S}(0)}  \big| L\big(\tfrac12+it_1,f\big) \big|^{a_1} \cdots \big| L\big(\tfrac12+it_{k},f \big) \big|^{a_{k}}  
\leq &   \left( \text{meas}(\mathcal{S}(0)) \cdot
\sum_{f \in H^*_{\kappa}(N)}\big| L\big(\tfrac12+it_1,f\big) \big|^{2a_1} \cdots \big| L\big(\tfrac12+it_{k},f \big) \big|^{2a_{k}} \right)^{1/2}
 \\
 \ll & \left( N \exp\left( -(\log\log N)^{2}/10 \right) N (\log N)^{O(1)} \right)^{1/2} \ll N.
\end{align*}

  Thus we may further assume that $j \geq 1$. Note that when $f \in \mathcal{S}(j)$, we set $x=N^{\alpha_j}$ in \eqref{basicest} to arrive at
\begin{align*}
\begin{split}
 & \big| L\big(1/2+it_1,f\big) \big|^{a_1} \cdots \big| L\big(1/2+it_{k},f \big) \big|^{a_{k}} \ll \exp \left(\frac {(A+1)a}{\alpha_j} \right) \exp \Big (
 \Re\sum^j_{i=1}{\mathcal M}_{i,j}(f)+ \Re\sum^{\log \log N/2}_{l=0}P_l(f) \Big ).
\end{split}
 \end{align*}

   When restricting the sum of $\big| L\big(1/2+it_1,f \big) \big|^{a_1} \cdots \big| L\big(1/2+it_{k},f \big) \big|^{a_{k}}$ over $\mathcal{S}(j)\bigcap \mathcal{P}(m)$, our treatments below require us to separate the sums over $p \leq 2^{m+1}$ on the right-hand side of the above expression from those over $p>2^{m+1}$. For this, we note that if $f \in \mathcal{P}(m)$, then
\begin{align*}
\begin{split}
  \Re \sum_{  p \leq 2^{m+1}} & \frac{2h(p)\lambda_f (p)}{p^{1/2+1/(\log N^{\alpha_{j}})}}  \frac{\log (N^{\alpha_{j}}/p)}{\log N^{\alpha_{j}}}-
  \Re \sum_{p \leq \log N} \frac{h(p^2)(\lambda_f (p^2)-1)}{p} 
   \\
 \leq &\Re \sum_{  p \leq 2^{m+1}}  \frac{2h(p)\lambda_f (p)}{p^{1/2+1/(\log N^{\alpha_{j}})}}  \frac{\log (N^{\alpha_{j}}/p)}{\log N^{\alpha_{j}}}-
 \Re \sum_{p \leq 2^{m+1}} \frac{h(p^2)(\lambda_f (p^2)-1)}{p}  +O(1)  \leq a2^{m/2+3}+O(1).
\end{split}
 \end{align*}

 It follows that
\begin{align}
\label{LboundinSP}
\begin{split}
 \sum_{f \in \mathcal{S}(j)\bigcap \mathcal{P}(m)} & \big| L\big(\tfrac12+it_1, f\big) \big|^{a_1} \cdots \big| L\big(\tfrac12+it_{k}, f \big) \big|^{a_{k}}   \\
   \ll & e^{a2^{m/2+4}} \sum_{\chi \in \mathcal{S}(j)\bigcap \mathcal{P}(m)}
 \exp \left(\frac {(A+1)a}{\alpha_j} \right)\exp \Big (  \Re \sum_{ 2^{m+1}< p \leq N^{\alpha_j} }
 \frac{2h(p)\lambda_f (p)}{p^{1/2+1/(\log N^{\alpha_{j}})}}  \frac{\log (N^{\alpha_{j}}/p)}{\log N^{\alpha_{j}}}\Big )
  \\
\ll &  e^{a2^{m/2+4}} \exp \left(\frac {(A+1)a}{\alpha_j} \right)\sum_{f \in \mathcal{S}(j)} \Big (2^{m/10}|P_m(f)| \Big )^{2\lceil 2^{m/2}\rceil }
\exp \Big ( \Re{\mathcal M}'_{1,j}(f)+\Re \sum^j_{i=2}{\mathcal M}_{i,j}(f)\Big ),
\end{split}
 \end{align}
   where
\begin{align*}
\begin{split}
 {\mathcal M}'_{1,j}(f):= \sum_{ 2^{m+1}< p \leq N^{\alpha_1} }
 \frac{2h(p)\lambda_f (p)}{p^{1/2+1/\log N^{\alpha_{j}}}}  \frac{\log (N^{\alpha_{j}}/p)}{\log N^{\alpha_{j}}}.
\end{split}
 \end{align*}

    We note that if $0 \leq m \leq (3/\log 2)\log\log\log N$ and $N$ large enough, then
\begin{align*}
\begin{split}
  \sum_{ p< 2^{m+1}  }
 \frac{2h(p)\lambda_f (p)}{p^{1/2+1/\log N^{\alpha_{j}}}}  \frac{\log (N^{\alpha_{j}}/p)}{\log N^{\alpha_{j}}} \leq 2a\sum_{ p< 2^{m+1}  }
 \frac{1}{\sqrt{p}} \leq \frac {100a \cdot 2^{m/2}}{m+1} \leq 100a(\log \log N)^{3/2}(\log \log \log N)^{-1},
\end{split}
 \end{align*}
   where the last estimate above follows from partial summation and Lemma \ref{RS}. \newline

We infer from this that if $\chi \in \mathcal{S}(j)$ and $N$ is large enough, then
\begin{align*}
\begin{split}
\left| {\mathcal M}'_{1,j}(f) \right| \leq 100a(\log \log N)^{3/2}(\log \log \log N)^{-1}+ \left| {\mathcal M}_{1,j}(f) \right| \leq 1.01\alpha^{-3/4}_1=1.01(\log \log N)^{3/2}.
\end{split}
 \end{align*}

Thus we also have $|\frac 12 {\mathcal M}'_{1,j}(f)|$, $|\frac 12 \overline{{\mathcal M}'_{1,j}(f)}| \leq 1.01(\log \log N)^{3/2}$. We then apply \cite[Lemma 5.2]{Kirila} to see that
\begin{align*}
\begin{split}
\exp \Big ( \Re {\mathcal M}'_{1,j}(f)\Big )=& \exp \Big ( \tfrac 12 {\mathcal M}'_{1,j}(f)+\tfrac 12 \overline{{\mathcal M}'_{1,j}(f)} \Big ) \ll  E_{e^2a\alpha^{-3/4}_1}(\frac 12{\mathcal M}'_{1,j}(f))E_{e^2a\alpha^{-3/4}_1}(\frac 12\overline{{\mathcal M}'_{1,j}(f)}) \\
=& \Big | E_{e^2a\alpha^{-3/4}_1}(\frac 12{\mathcal M}'_{1,j}(f)) \Big |^2, 
\end{split}
 \end{align*}
    where for any real numbers $x$ and $\ell \geq 0$, we define
\begin{align*}
  E_{\ell}(x) = \sum_{j=0}^{\lceil\ell \rceil} \frac {x^{j}}{j!}.
\end{align*}

   We then estimate $\exp \Big ( \Re {\mathcal M}'_{1,j}(f)+ \Re\sum^j_{i=2}{\mathcal M}_{i,j}(f)\Big )$ similarly by noting that $|{\mathcal M}_{i, j}(f)|\leq  \alpha^{-3/4}_i$ for $f \in \mathcal{S}(j)$. This leads to 
\begin{align*}
\begin{split}
\exp \Big ( \Re {\mathcal M}'_{1,j}(f)+ \Re\sum^j_{i=2}{\mathcal M}_{i,j}(f)\Big ) \ll \Big | E_{e^2a\alpha^{-3/4}_1}(\tfrac 12{\mathcal M}'_{1,j}(f)) \Big |^2
\prod^j_{i=2}\Big | E_{e^2a\alpha^{-3/4}_i}(\tfrac 12{\mathcal M}_{i,j}(f)) \Big |^2.
\end{split}
 \end{align*}

 It thus follows from the description on $\mathcal{S}(j)$ that when $j \geq 1$,
\begin{align*}
\begin{split}
\sum_{f \in \mathcal{S}(j)\bigcap \mathcal{P}(m)} & \big| L\big(1/2+it_1,f\big) \big|^{a_1} \cdots \big| L\big(1/2+it_{k},f \big) \big|^{a_{k}}  \\
 \ll & e^{a2^{m/2+4}} \exp \left(\frac {(A+1)a}{\alpha_j} \right)
 \sum^{R}_{l=j+1} \sum_{f \in H^*_{\kappa}(N)}\Big (2^{m/10}|P_m(f)| \Big )
 ^{2\lceil 2^{m/2}\rceil } \\
& \hspace*{2cm} \times \exp \Big ( \Re {\mathcal M}'_{1,j}(f)+ \Re \sum^j_{i=2}{\mathcal M}_{i,j}(f)\Big )\Big ( \alpha^{3/4}_{j+1}\big|{\mathcal
M}_{j+1, l}(f)\big |\Big)^{2\lceil 1/(10\alpha_{j+1})\rceil } \\
\ll &  e^{a2^{m/2+4}}\exp \left(\frac {(A+1)a}{\alpha_j} \right)\sum^{R}_{l=j+1}
\sum_{f \in H^*_{\kappa}(N)}
\Big (2^{m/10}|P_m(f)| \Big )^{2\lceil 2^{m/2}\rceil } \\
& \hspace*{2cm} \times \Big | E_{e^2a\alpha^{-3/4}_1}(\tfrac 12{\mathcal M}'_{1,j}(f)) \Big |^2
\prod^j_{i=2}\Big | E_{e^2a\alpha^{-3/4}_i}(\tfrac 12{\mathcal M}_{i,j}(f)) \Big |^2\Big ( \alpha^{3/4}_{j+1}\big |{\mathcal
M}_{j+1, l}(f)\big |\Big)^{2\lceil 1/(10\alpha_{j+1})\rceil } .
\end{split}
 \end{align*}

 Note that we have for $1 \leq j \leq \mathcal{I}-1$,
\begin{align*}
\mathcal{I}-j \leq \frac{\log(1/\alpha_{j})}{\log 20}  \quad \mbox{and} \quad \sum_{N^{\alpha_{j}} < p \leq N^{\alpha_{j+1}}} \frac{1}{p}
 = \log \alpha_{j+1} - \log \alpha_{j} + o(1) = \log 20 + o(1) \leq 10 .
\end{align*}

  We argue again as in the proof of Lemma \ref{prop: upperbound} to see that by taking $N$ large enough,
\begin{align*}
\begin{split}
    \sum^{ \mathcal{I}}_{l=j+1}
\sum_{f \in H^*_{\kappa}(N)} &
\Big (2^{m/10}|P_m(f)| \Big )^{2\lceil 2^{m/2}\rceil } \\
& \hspace*{1cm} \times \Big | E_{e^2a\alpha^{-3/4}_1}(\tfrac 12{\mathcal M}'_{1,j}(f)) \Big |^2
\prod^j_{i=2}\Big | E_{e^2a\alpha^{-3/4}_i}(\tfrac 12{\mathcal M}_{i,j}(f)) \Big |^2\Big ( \alpha^{3/4}_{j+1}\big |{\mathcal
M}_{j+1, l}(f)\big |\Big)^{2\lceil 1/(10\alpha_{j+1})\rceil }  \\
\ll & N(\mathcal{I}-j)e^{-44a(A+1)/\alpha_{j+1}} 2^m (2^{-2m/15})^{\lceil 2^{m/2}\rceil } \prod_{p \leq N^{\alpha_j}}\left( 1+O \left( \frac 1{p^2} \right) \right) \\
\ll & N(\mathcal{I}-j)e^{-44a(A+1)/\alpha_{j+1}} 2^m (2^{-2m/15})^{\lceil 2^{m/2}\rceil } \prod_{p \leq N}\left( 1+O \left( \frac 1{p^2} \right) \right) \\
\ll & e^{-42a(A+1)/\alpha_{j+1}} 2^m (2^{-2m/15})^{\lceil 2^{m/2}\rceil }. 
\end{split}
 \end{align*}
 
   We then conclude from the above and \eqref{LboundinSP} that (by noting that $20/\alpha_{j+1}=1/\alpha_j$)
\begin{align*}
\begin{split}
 & \sum_{\chi \in \mathcal{S}(j)\bigcap \mathcal{P}(m)} \big| L\big(\tfrac12+it_1,f\big) \big|^{a_1} \cdots \big| L\big(\tfrac12+it_{k},f \big) \big|^{a_{k}} \ll  e^{-a(A+1)/(10\alpha_{j})}2^m  e^{a2^{m/2+4}} (2^{-2m/15})^{\lceil 2^{m/2}\rceil }N.
\end{split}
 \end{align*}

As the sum of the right side expression over $m$ and $j$ converges, the above implies \eqref{sumovermj} and this completes the proof of Theorem \ref{t1}.
  
\vspace*{.5cm}

\noindent{\bf Acknowledgments.}  P. G. is supported in part by NSFC grant 12471003 and L. Z. by the FRG grant PS71536 at the University of New South Wales. 
\bibliography{biblio}
\bibliographystyle{amsxport}

\end{document}